\newcommand{\R}{\mathbb{R}}
\newcommand{\B}{\mathcal{B}}
\newcommand{\dif}[0]{\ensuremath{\,\mathrm{d}}}
\newcommand{\norm}[1]{\ensuremath{\Vert #1 \Vert}}
\newcommand{\abs}[1]{\ensuremath{\vert #1 \vert}}
\newcommand{\scabs}[1]{\ensuremath{\left\vert #1 \right\vert}}
\newcommand{\UpperClass}[0]{\ensuremath{\mathfrak{U}}}
\newcommand{\LowerClass}[0]{\ensuremath{\mathfrak{L}}}
\newcommand{\UpperPerron}[0]{\ensuremath{\overline{H}}}
\newcommand{\LowerPerron}[0]{\ensuremath{\underline{H}}}
\newcommand{\Perron}[0]{\ensuremath{H}}
\DeclareMathOperator*{\osc}{osc}
\def\XXint#1#2#3{{\setbox0=\hbox{$#1{#2#3}{\int}$}
     \vcenter{\hbox{$#2#3$}}\kern-.5\wd0}}
\theoremstyle{plain}
\newtheorem{theorem}[equation]{Theorem}
\newtheorem{lemma}[equation]{Lemma}
\newtheorem{proposition}[equation]{Proposition}
\theoremstyle{remark}
\newtheorem{remark}[equation]{Remark}
\numberwithin{equation}{section}
\theoremstyle{definition}
\newtheorem{definition}[equation]{Definition}
\begin{document}

\title{Perron's method for the porous medium equation}
\author{Juha Kinnunen, Peter Lindqvist \and Teemu Lukkari }
\address[Juha Kinnunen]{Department of Mathematics, Aalto University, P.O. Box 11100, 
FI-00076 Aalto University, Finland}
\email{juha.k.kinnunen@aalto.fi}

\address[Peter Lindqvist]{Department of Mathematics,Norwegian University of Science and Technology,
N-7491 Trondheim, Norway}
\email{lqvist@math.ntnu.no}

\address[Teemu Lukkari]{Department of Mathematics and Statistics, P.O. Box 35 (MaD),
FI-40014 University of Jyv\"askyl\"a, Finland}
\email{teemu.j.lukkari@jyu.fi}

\thanks{The research is supported by the Academy of Finland. Part of
  this paper was written during the authors' stay at the Institut
  Mittag-Leffler in Djursholm. }
\subjclass[2010]{Primary 35K55, Secondary 35K65, 35K20, 31C45}
\keywords{Perron method, Porous medium equation, comparison principle,
obstacles}

\begin{abstract}
This work extends Perron's method for the porous medium equation in the slow diffusion case. 
The main result shows that nonnegative continuous boundary functions are resolutive
in a general cylindrical domain.
\end{abstract}

\maketitle

\section{Introduction}

The porous medium equation
\[
\frac{\partial u}{\partial t}-\Delta u^m=0
\]
is an important prototype of a nonlinear parabolic equation and it is
by now well understood. See the monographs \cite{DK},
\cite{VazquezBook} and \cite{Kiinalaiset} for more on this topic.
However, little is known about the boundary behaviour of solutions in
irregular domains and with general boundary values, except for the
case $m=1$, when we have the classical heat equation~\cite{Watson}.
We shall consider this challenging question. Our main objective is to
apply the method, introduced for harmonic
functions by Perron \cite{Perron}, to this fascinating nonlinear equation. We focus on the slow
diffusion case $m>1$ in cylindrical domains.  For simplicity, we only
consider nonnegative and bounded boundary functions, in which case the
solutions are nonnegative and bounded as well, by the comparison
principle.  However, it is of utmost importance to allow solutions to
attain the value zero, so that moving boundaries, such as those
exhibited by the Barenblatt solution, are not excluded.

We consider the boundary value problem
\begin{displaymath}
  \begin{cases}
    \dfrac{\partial u}{\partial t}-\Delta u^m=0\quad\text{in}\quad\Omega_T,&\\
    u=g\quad\text{on}\quad\partial\Omega\times[0,T), & \\
    u(x,0)=g(x,0),
  \end{cases}
\end{displaymath}
in a bounded open space-time cylinder $\Omega_T=\Omega\times(0,T)$ in
$\R^{N+1}$.  The precise definitions of the solution and the boundary
conditions will be given later. For a given boundary value function
$g$, Perron's method produces two functions: the upper solution
$\UpperPerron_g$ and the lower solution $\LowerPerron_g$ with
$\LowerPerron_g\leq \UpperPerron_g$. Our first main result is that the
upper and lower Perron solutions are indeed weak solutions of the
porous medium equation.  However, the upper and lower solutions may
still take the wrong boundary values. The construction can be
performed not only for space-time cylinders but also for more general
domains in $\R^{N+1}$.

A central question in this theory is to determine when the upper and
lower solutions are the same function. A classical result in this
direction is \emph{Wiener's resolutivity theorem} for harmonic
functions: if the boundary value function is continuous, the upper and
lower Perron solutions coincide, see \cite{WienerResol}.  Our second
main result extends this to the porous medium equation. 
More precisely, nonnegative continuous boundary functions are resolutive for the
porous medium equation in general cylindrical domains in the slow diffusion case $m>1$. No regularity
assumptions on the base of the space-time cylinder are needed.  As far
as we know, the corresponding result for more general domains in
$\R^{N+1}$ remains open.

Perron's method requires a parabolic comparison principle so that the
upper and lower Perron solutions can be defined consistently. Our
first step is to establish a comparison principle in general space-time
cylinders. To prove the resolutivity theorem we first reduce the
situation to smooth boundary values by approximation.  The key step in
the proof for smooth boundary values is constructing super- and
subsolutions which are sufficiently regular in the time direction.  We
use a penalized problem related to the obstacle problem for the porous
medium equation for this purpose, see \cite{BCL}.  Delicate
approximation results and energy estimates play a pivotal role in the
argument.  We hope that these results will have other applications as
well. It is likely that our results and methods also apply to more general equations 
of the type
\[
\frac{\partial u}{\partial t}-\Delta A(u)=0,
\]
see \cite{DK} and \cite{VazquezBook}.

\section{Weak solutions and weak supersolutions}

In this section, we discuss notion on which the construction of
Perron solutions will be based. First, we introduce some notation.

Let $\Omega$ be an open and bounded subset of $\R^N$, and let
$0<t_1<t_2<T$. We denote space-time cylinders by
$\Omega_T=\Omega\times(0,T)$ and $U_{t_1,t_2}=U\times (t_1,t_2)$,
where $U\subset\Omega$ is an open set. We call a cylinder
$U_{t_1,t_2}$ \emph{regular} if the boundary of the base set $U$ is
smooth. The \emph{parabolic boundary} of a space-time cylinder
$U_{t_1,t_2}$ is the set
\begin{displaymath}
  \partial_p U_{t_1,t_2}=(\overline{U}\times\{t_1\})
  \cup (\partial U\times [t_1,t_2]),
\end{displaymath}
i.e. only the initial and lateral boundaries are taken into account.

We use the notation $H^1(\Omega)$ for the Sobolev space consisting of
functions $u$ in $L^2(\Omega)$ such that the weak gradient exists and
also belongs to $L^2(\Omega)$. The Sobolev space with zero boundary
values $H^1_0(\Omega)$ is the completion of $C^{\infty}_0(\Omega)$ in
$H^1(\Omega)$.  The parabolic Sobolev space $L^2(0,T;H^1(\Omega))$
consists of measurable functions $u:\Omega_T\to[-\infty,\infty]$ such
that $x\mapsto u(x,t)$ belongs to $H^1(\Omega)$ for almost all
$t\in(0,T)$, and
\[
\iint_{\Omega_T}\left(\abs{u}^2+\abs{\nabla u}^2\right)\dif x\dif t<\infty.
\]
The definition of the space $L^2(0,T;H^{1}_0(\Omega))$ is similar.  We
say that $u\in L^2_{loc}(0,T;H^{1}_{loc}(\Omega))$ if $u$ belongs to
the parabolic Sobolev space for all $U_{t_1,t_2}\Subset\Omega_T$.
The symbol $\Subset$ means that the set is compactly containd in the bigger set.

\begin{definition}
  Assume that $m>1$.  A nonnegative function $u:\Omega_T\to\R$ is a
  \emph{weak solution} of the porous medium equation
  \begin{equation}\label{eq:pme}
    \frac{\partial u}{\partial t}-\Delta u^m=0
  \end{equation}
  in $\Omega_T$, if $u^m\in  L^2_{loc}(0,T;H^{1}_{loc}(\Omega))$ and
  \begin{equation}\label{eq:weak-pme}
    \iint_{\Omega_T}\left(-u\frac{\partial\varphi}{\partial t}
    +\nabla u^m\cdot\nabla\varphi\right)\dif x\dif t=0
  \end{equation}
  for all smooth test functions $\varphi$ compactly supported in
  $\Omega_T$.  We define \emph{weak supersolutions} by requiring that the
  integral in \eqref{eq:weak-pme} is nonnegative for nonnegative test
  functions $\varphi$.
\end{definition}
Throughout the work we assume that $m>1$. 
It is an interesting question whether corresponding results can be proved also when $m<1$.
Our results and methods also apply to solutions with a changing sign, but we have chosen to consider only nonnegative solutions for simplicity.
However, it is important to allow solutions to
attain the value zero.

Weak solutions are locally H\"older continuous
after a possible redefinition on a set of $(N+1)$-dimensional measure
zero; see \cite{DahlbergKenig, DiBenedettoFriedman, Kiinalaiset} or \cite[Chapter 7]{VazquezBook}.  Thus, without loss of generality, we
may assume that solutions are continuous.  Moreover, weak
supersolutions are lower semicontinuous after a redefinition on a set
of $(N+1)$-dimensional measure zero, see~\cite{AL}.

Besides a local notion of weak solutions, we need a concept of weak
solutions to the initial-boundary value problem
\begin{displaymath}
  \begin{cases}
    \dfrac{\partial u}{\partial t}-\Delta u^m=0\quad\text{in}\quad\Omega_T,&\\
    u=g\quad\text{on}\quad\partial\Omega\times[0,T), & \\
    u(x,0)=g(x,0),
  \end{cases}
\end{displaymath}
where $g$ is a positive, continuous function defined on
$\overline{\Omega}_T$ with $g^m\in L^2(0,T;H^1(\Omega))$.  The lateral
boundary condition is interpreted in the Sobolev sense, meaning that
$u^m-g^m\in L^2(0,T;H^1_0(\Omega))$. The initial condition is
incorporated into the weak formulation by requiring that
\begin{equation}
    \label{eq:ibvp}
      \iint_{\Omega_T}\left(-u\frac{\partial\varphi}{\partial t}
        +\nabla u^m\cdot\nabla\varphi\right)\dif x\dif t=\int_\Omega
      g(x,0)\varphi(x,0)\dif x
\end{equation}
for smooth test functions $\varphi$ vanishing at the time $T$ and with
compact support in space.  With this definition, solutions to the
initial-boundary value problem are unique.  This follows by an
application of Ole\u\i nik's test function, see the proof of
Lemma~\ref{lem:epsilon-conv} below. It is straightforward to check
that the initial values are attained in this sense if and only if
\begin{equation}\label{eq:initial-vals-dist}
  \lim_{t\to 0}\int_{\Omega}u(x,t)\eta(x)\dif
  x=\int_{\Omega}g(x,0)\eta(x)\dif x
\end{equation}
for all $\eta\in C_0^\infty(\Omega)$.

\section{Viscosity supersolutions}

We will employ the notion of \emph{viscosity supersolutions} to
\eqref{eq:pme}, following \cite{KinnunenLindqvist2}.  The term
``viscosity'' is used here purely as a label. In the case $m=1$ this
definition gives supertemperatures, see \cite{Watson}.
For the more common definition of viscosity solutions using pointwise touching test functions, we refer to \cite{BrandleVazquez} and \cite{CV}. 
It is an interesting question whether the two definitions give the same class of functions.

\begin{definition}\label{def:viscosity-supersols}
  A function $u:\Omega_T\to [0,\infty]$ is a \emph{viscosity
  supersolution}, if
  \begin{enumerate}
  \item $u$ is lower semicontinuous,
  \item $u$ is finite in a dense subset of $\Omega_T$, and
  \item the following comparison principle holds: Let
    $U_{t_1,t_2}\Subset\Omega$, and let $h$ be a solution to
    \eqref{eq:pme} which is continuous in $\overline{U_{t_1,t_2}}$.
    If $h\leq u$ on $\partial_p U_{t_1,t_2}$, the $h\leq u$  in
    $U_{t_1,t_2}$.
  \end{enumerate}

  The definition of \emph{viscosity subsolutions} is similar; they are
  upper semicontinuous, and the inequalities in the comparison
  principle are reversed.
\end{definition}
Observe that these functions are defined at \emph{every} point.  A
similar definition was introduced by F. Riesz \cite{Riesz} for the
Laplacian.
The fundamental example of a viscosity supersolution in the sense of
Definition \ref{def:viscosity-supersols} is the Barenblatt solution
\cite{Barenblatt, ZeldovichKompaneets}, which is given by the formula
\begin{displaymath}
  \B_m(x,t)=
  \begin{cases}
    t^{-\lambda}\left(C-\frac{\lambda(m-1)}{2mn}
      \frac{\abs{x}^2}{t^{2\lambda/n}}\right)_+^{1/(m-1)}, & t>0,\\
    0, & t\leq 0,
  \end{cases}
\end{displaymath}
where
\begin{displaymath}
  \lambda=\frac{n}{n(m-1)+2}.
\end{displaymath}
The constant $C$ is usually chosen so that
\begin{displaymath}
  \int_{\Omega}\B_m(x,t)\dif x=1
\end{displaymath}
for all $t>0$.  It is a viscosity supersolution, but not a weak
supersolution. This is due to the lack of integrability of the
gradient.  For other examples, see \cite{VazquezBook}.  However,
\emph{bounded} viscosity supersolutions are weak supersolutions. In
particular, their $m$th power belongs to
$L^2_{loc}(0,T;H^1_{loc}(\Omega))$. Weak supersolutions are viscosity
supersolutions, provided that they are lower semicontinuous, see
\cite{AL} and \cite{KinnunenLindqvist2}. In the present work, it is
enough for the reader  to consider lower semicontinuous weak supersolutions
that are defined at each point in their domain.

Some properties are immediate consequences of the definition.  The
pointwise minimum of a finite number of viscosity supersolutions is a
viscosity supersolution. In particular, the truncations $\min\{u,k\}$,
$k=1,2,\dots$, of a viscosity supersolution $u$ are viscosity
supersolutions. The fact that an increasing limit of viscosity
supersolutions is a viscosity supersolution, provided that the limit
is finite in a dense subset, also follows directly from the
definition.

Our main interest is Perron solutions with continuous boundary values
in irregular domains. In this context, the situation does not change
if one only considers bounded viscosity super- and subsolutions, and
we shall do so from now on.

We begin with the definition of the \emph{Poisson modification} of
a viscosity supersolution.  Let $u$ be a bounded viscosity
supersolution  and $U_{t_1,t_2}\Subset\Omega_T$ be a regular space-time
cylinder.  We define
\begin{displaymath}
  P(u,U_{t_1,t_2})=
  \begin{cases}
    u\quad\text{in}\quad\Omega_T\setminus U_{t_1,t_2},&\\
    h\quad\text{in}\quad U_{t_1,t_2},&
  \end{cases}
\end{displaymath}
where $h$ is the solution in $U_{t_1,t_2}$ with boundary values $u$.
The function $h$ is constructed as follows: by semicontinuity, we find
an increasing sequence $(\varphi_k)$ of smooth functions  converging to
$u$ pointwise in $U_{t_1,t_2}$ as $k\to\infty$. Let $h_k$ be the
solution with values $\varphi_k$ on the parabolic boundary of
$U_{t_1,t_2}$. Then $h_k\leq u$ and the sequence $(h_k)$ is increasing
by the comparison principle. It follows that $h=\lim h_k$ is a
solution in $U_{t_1,t_2}$. Further, it is easy to verify that
$P(u,U_{t_1,t_2})$ is a viscosity supersolution in $\Omega_T$, see
\cite[pp. 157--158]{KinnunenLindqvist2}.

We need an auxiliary result to bypass the fact that we may not add
constants to solutions.
\begin{lemma}\label{lem:epsilon-conv}
  Assume that $g\in C(\overline{\Omega}_T)$ is a function such that
  $g^m\in L^2(0,T;H^1(\Omega))$ and $0\leq g\leq M$. Define the
  function $g_\varepsilon$ by
  \begin{displaymath}
    g_\varepsilon=(g^m+\varepsilon^m)^{1/m},
  \end{displaymath}
  where $0\leq \varepsilon\leq 1$.  Let $u$ and $u_\varepsilon$ be the
  unique weak solutions in the sense of \eqref{eq:ibvp} to the initial-boundary value problems
  \begin{displaymath}
    \begin{cases}
     \dfrac{\partial u}{\partial t}-\Delta u^m=0\quad\text{in}\quad\Omega_T, &\\
      u^m-g^m\in L^2(0,T;H^1_0(\Omega)), &\\
      u(x,0)=g(x,0),
    \end{cases}
  \end{displaymath}
  and 
  \begin{displaymath}
    \begin{cases}
     \dfrac{\partial u_\varepsilon}{\partial t}-\Delta u^m_{\varepsilon}=0 \quad\text{in}\quad\Omega_T,&\\
      u^m_\varepsilon-g^m_\varepsilon\in L^2(0,T;H^1_0(\Omega)), &\\
      u_\varepsilon(x,0)=g_\varepsilon(x,0),
    \end{cases}
  \end{displaymath}
  respectively.
 Then we have
  \begin{displaymath}
    \iint_{\Omega_T}(u_\varepsilon-u)(u^m_{\varepsilon}-u^m)\dif
    x\dif t\leq \varepsilon^m \abs{\Omega_T}(M+1)+\varepsilon 
    \abs{\Omega_T}(M+1)^m.
  \end{displaymath}
\end{lemma}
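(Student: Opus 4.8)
The plan is to subtract the two weak formulations \eqref{eq:ibvp} and to test the resulting identity with Ole\u\i nik's function. Put $w=u-u_\varepsilon$ and $W=u^m-u_\varepsilon^m$. Both $u^m-g^m$ and $u^m_\varepsilon-g^m_\varepsilon$ lie in $L^2(0,T;H^1_0(\Omega))$, and $g^m_\varepsilon-g^m=\varepsilon^m$ is a constant, so $W+\varepsilon^m\in L^2(0,T;H^1_0(\Omega))$; this is precisely why the constant $\varepsilon^m$ must be carried along, since one may not add constants to solutions. First I would record the bounds needed at the end: $u$ and $u_\varepsilon$ are nonnegative, comparison with the constant solutions $M$ and $M+1$ gives $u\le M$ and $u_\varepsilon\le M+1$ (note $g_\varepsilon=(g^m+\varepsilon^m)^{1/m}\le g+\varepsilon\le M+1$ by subadditivity of $t\mapsto t^{1/m}$), and the same subadditivity yields $0\le g_\varepsilon-g\le\varepsilon$ everywhere.

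Next I would set
\[
\psi(x,t)=\int_t^T\bigl(W(x,s)+\varepsilon^m\bigr)\dif s,\qquad 0\le t\le T,
\]
so that $\psi\in L^2(0,T;H^1_0(\Omega))$, $\psi(\cdot,T)=0$, $\partial_t\psi=-(W+\varepsilon^m)$, and $\nabla\psi(x,t)=\int_t^T\nabla W(x,s)\dif s$ with $\partial_t\nabla\psi=-\nabla W$. Subtracting \eqref{eq:ibvp} for $u_\varepsilon$ from \eqref{eq:ibvp} for $u$ and inserting $\varphi=\psi$ — admissible after the usual regularization, i.e.\ mollifying in the time variable and approximating $\psi(\cdot,t)$ in $H^1_0(\Omega)$ by smooth functions compactly supported in space — gives
\[
\iint_{\Omega_T}\bigl(-w\,\partial_t\psi+\nabla W\cdot\nabla\psi\bigr)\dif x\dif t
=\int_\Omega\bigl(g(x,0)-g_\varepsilon(x,0)\bigr)\psi(x,0)\dif x .
\]
Since $-w\,\partial_t\psi=w\,(W+\varepsilon^m)$, the time term equals $\iint_{\Omega_T}wW\dif x\dif t+\varepsilon^m\iint_{\Omega_T}w\dif x\dif t$, and, using $\partial_t\nabla\psi=-\nabla W$ and $\psi(\cdot,T)=0$,
\[
\iint_{\Omega_T}\nabla W\cdot\nabla\psi\dif x\dif t
=-\tfrac12\iint_{\Omega_T}\partial_t\abs{\nabla\psi}^2\dif x\dif t
=\tfrac12\int_\Omega\abs{\nabla\psi(x,0)}^2\dif x\ge 0 .
\]
Noting that $\iint_{\Omega_T}wW\dif x\dif t=\iint_{\Omega_T}(u_\varepsilon-u)(u^m_\varepsilon-u^m)\dif x\dif t\ge 0$ because $t\mapsto t^m$ is increasing, I would rearrange the above into
\[
\iint_{\Omega_T}wW\dif x\dif t
=-\varepsilon^m\iint_{\Omega_T}w\dif x\dif t-\tfrac12\int_\Omega\abs{\nabla\psi(x,0)}^2\dif x
+\int_\Omega\bigl(g-g_\varepsilon\bigr)(x,0)\,\psi(x,0)\dif x .
\]

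It then remains to estimate the right-hand side from above. The gradient term is nonpositive and is discarded. Since $\abs{w}=\abs{u-u_\varepsilon}\le M+1$, the first term is at most $\varepsilon^m\abs{\Omega_T}(M+1)$. For the last term, $\abs{g-g_\varepsilon}\le\varepsilon$, while $0\le u\le M$, $0\le u_\varepsilon\le M+1$ and $0\le\varepsilon\le1$ give $-(M+1)^m\le u^m-u^m_\varepsilon+\varepsilon^m\le M^m+1\le(M+1)^m$, so $\abs{\psi(x,0)}\le(M+1)^mT$ and the last term is at most $\varepsilon\abs{\Omega_T}(M+1)^m$. Adding the two surviving bounds gives the asserted inequality. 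I expect the genuinely delicate point to be the justification of $\psi$ as a test function and of the integration by parts in $t$ that produces the boundary term at $t=0$; both rest on $\psi$ and $\nabla\psi$ being absolutely continuous in $t$ with values in $L^2(\Omega)$, which follows from $W+\varepsilon^m\in L^2(0,T;H^1_0(\Omega))$ together with a standard mollification in time. The very same computation, carried out for two solutions with a common boundary datum $g$ so that the right-hand side vanishes, gives the uniqueness asserted after \eqref{eq:ibvp}.
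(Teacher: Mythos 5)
Your proof is correct and takes essentially the same route as the paper: subtract the two weak formulations, test with Ole\u\i nik's function $\psi(x,t)=\int_t^T(u^m-u_\varepsilon^m+\varepsilon^m)\,ds$ (the negative of the paper's $\eta$), integrate the gradient term in time to produce a sign-definite quantity that is then discarded, and bound the remaining two terms via $g_\varepsilon-g\le\varepsilon$ and the $L^\infty$-bounds $u\le M$, $u_\varepsilon\le M+1$ from comparison. The only cosmetic difference is that the paper splits off the $-\varepsilon^m T\int_\Omega(g_\varepsilon-g)(x,0)\,dx$ piece and discards it as nonpositive, whereas you keep it inside $\psi(x,0)$ and bound $\abs{\psi(x,0)}\le(M+1)^m T$; both yield the same final inequality.
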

\begin{proof}
  We use the so-called Ole\u\i nik's test function. The function
  $u_\varepsilon^m-u^m-\varepsilon^m$ has zero boundary values on the
  lateral boundary in Sobolev's sense. Thus
  \begin{displaymath}
    \eta(x,t)=
    \begin{cases}
      \int_t^T(u_\varepsilon^m-u^m-\varepsilon^m)\dif s, & 0<t<T,\\
      0, & t\geq T,
    \end{cases}
  \end{displaymath}
  is an admissible test function for the equations satisfied by $u$
  and $u_\varepsilon$.  This gives
  \[
   \iint_{\Omega_T}\left(-u\frac{\partial \eta}{\partial t}+\nabla u^m\cdot
    \nabla \eta\right) \dif x \dif t=\int_{\Omega}u(x,0)\eta(x,0)\dif x,
    \]
   and
    \[ 
    \iint_{\Omega_T}\left(-u_\varepsilon\frac{\partial \eta}{\partial t}+\nabla u^m_\varepsilon\cdot
    \nabla \eta\right) \dif x \dif t=\int_{\Omega}u_\varepsilon(x,0)\eta(x,0)\dif x.
  \]
  Since we have
  \[
    \eta_t=-(u_\varepsilon^m-u^m)+\varepsilon^m
    \quad\text{and}\quad
    \nabla \eta=\int_t^T\nabla (u^m_\varepsilon -u^m)\dif s,
  \]
  we obtain 
  \begin{align*}
    \iint_{\Omega_T}&\bigg((u_\varepsilon -u)(u^m_\varepsilon-u^m-\varepsilon^m)\\
    &\qquad+\nabla
    (u^m_\varepsilon-u^m)\cdot\int_t^T\nabla(u_\varepsilon^m-u^m)\dif s\bigg)\dif
    x\dif t\\
    =&\int_{\Omega}(g_\varepsilon(x,0)-g(x,0))\left(\int_0^T(u_\varepsilon^m-u^m-\varepsilon^m)\dif
    s\right)\dif x
  \end{align*}
  by subtracting the equations. Integration with respect to the variable $t$ shows
  that the triple integral equals to
  \begin{displaymath}
    \frac{1}{2}\int_\Omega\left(\int_0^T(\nabla u^m_\varepsilon-\nabla
      u^m)\dif s\right)^2\dif x,
  \end{displaymath}
  which is a positive quantity. Thus we get the estimate
  \begin{displaymath}
    \begin{aligned}
    \iint_{\Omega_T}&(u-u_\varepsilon)(u^m-u_\varepsilon^m)\dif x\dif t
    \leq \varepsilon^m \iint_{\Omega_T}(u_\varepsilon-u)\dif x\dif t\\
    &+\int_{\Omega}(g_\varepsilon(x,0)-g(x,0))
   \left( \int_0^T(u_\varepsilon^m-u^m)\dif
    s\right)\dif x\\
    &-\varepsilon^m T \int_{\Omega}(g_\varepsilon(x,0)-g(x,0))\dif x.
    \end{aligned}
  \end{displaymath}
  The last term on the right-hand side is negative, since $g_\varepsilon\geq
  g$, and we simply discard it. Furthermore,   by the definition of $g_\varepsilon$,  we have
  \begin{displaymath}
    g_\varepsilon-g=(g^m+\varepsilon^m)^{1/m}-g\leq \varepsilon
  \end{displaymath}
and, by the maximum principle, we conclude that $u\leq M$ and $u_\varepsilon\leq M+1$.
   The required estimate  follows, since
  \begin{displaymath}
    u_\varepsilon-u\leq M+1 \quad\text{and}\quad
    u_\varepsilon^m-u^m\leq (M+1)^m.\qedhere
  \end{displaymath}  
\end{proof}

We conclude by a comparison principle between viscosity sub- and
supersolutions. The essential feature here is that the base $\Omega$
of the space-time cylinder $\Omega_T$ may be an arbitrary bounded open
set. For the comparison principle in regular cylinders, 
see \cite[pp.~10-12]{DK} or  \cite[pp.~132-134]{VazquezBook}.

\begin{theorem}[Comparison Principle]\label{thm:comparison}
  Let $u$ be a bounded viscosity subsolution and $v$ a bounded
  viscosity supersolution such that
  \begin{equation}\label{eq:comp}
    \limsup_{z\to z_0}u\leq \liminf_{z\to z_0} v
  \end{equation}
  for all $z_0\in \partial_p \Omega_T$. Then
    $u\leq v$ in $\Omega_T$.
\end{theorem}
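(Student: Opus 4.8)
The plan is to argue by contradiction. Assume $u(z_1)>v(z_1)$ for some $z_1=(x_1,t_1)\in\Omega_T$, and work with $m$th powers: since $u$ is upper and $v$ lower semicontinuous, $w:=u^m-v^m$ is upper semicontinuous, and $w(z_1)>0$. Fix $\varepsilon$ with $0<\varepsilon<w(z_1)$. By \eqref{eq:comp} and the monotonicity of $t\mapsto t^m$ one has $\limsup_{z\to z_0}w(z)\le 0$ at every $z_0\in\partial_p\Omega_T$, so around each such $z_0$ there is a ball, which may be chosen small enough to miss $z_1$, on which $w<\varepsilon$ inside $\Omega_T$. Their union is a relative neighbourhood $V_\varepsilon$ of $\partial_p\Omega_T$ in $\overline{\Omega_T}$ with $z_1\notin V_\varepsilon$ and $V_\varepsilon\cap\Omega_T\subset\{w<\varepsilon\}$, so $K_\varepsilon:=\overline{\Omega_T}\setminus V_\varepsilon$ is a compact subset of $\Omega\times(0,T]$ containing $z_1$, and $u^m<v^m+\varepsilon$ on $\Omega_T\setminus K_\varepsilon$. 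Hence one can fix a \emph{regular} cylinder $Q_\varepsilon=U_\varepsilon\times(s_\varepsilon,T-\varepsilon)\Subset\Omega_T$, with $U_\varepsilon$ a smoothly bounded neighbourhood of the spatial projection of $K_\varepsilon\cap\{t\le T-\varepsilon\}$ and $s_\varepsilon>0$ small, such that $z_1\in Q_\varepsilon$, such that $u^m<v^m+\varepsilon$ on $\partial_pQ_\varepsilon$, and such that the distance from $z_1$ to $\partial Q_\varepsilon$ is bounded below by a constant $\rho_1>0$ independent of $\varepsilon$; this uses that $z_1$ has positive distance to $\partial_p\Omega_T$ and $t_1<T$, and the fact that the top face $\overline{U_\varepsilon}\times\{T-\varepsilon\}$ is \emph{not} part of $\partial_pQ_\varepsilon$ and needs no control.

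Next I would build two families of competitors in $Q_\varepsilon$. Approximate the lower semicontinuous $v$ from below on $\overline{Q_\varepsilon}$ by an increasing sequence of nonnegative Lipschitz functions $\psi_k\uparrow v$; then each $\psi_k^m$ lies in the parabolic Sobolev space over $Q_\varepsilon$. Let $h_k$ and $h_{k,\varepsilon}$ be the weak solutions of \eqref{eq:ibvp} in $Q_\varepsilon$ with lateral and initial data $\psi_k$ and $(\psi_k^m+\varepsilon)^{1/m}$ respectively, the latter being the function $g_\varepsilon$ of Lemma~\ref{lem:epsilon-conv} with its parameter taken to be $\varepsilon^{1/m}$; these exist, are unique, and are continuous up to $\overline{Q_\varepsilon}$ since $Q_\varepsilon$ is regular. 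As $\psi_k\le v$ on $\partial_pQ_\varepsilon$, the comparison property of the viscosity supersolution $v$ gives $h_k\le v$ in $Q_\varepsilon$. Since $\psi_k^m+\varepsilon-u^m$ is an increasing sequence of lower semicontinuous functions on the compact set $\partial_pQ_\varepsilon$ converging to the positive function $v^m+\varepsilon-u^m$, a finite-subcover argument shows $u<(\psi_k^m+\varepsilon)^{1/m}$ on $\partial_pQ_\varepsilon$ for all large $k$, whence the comparison property of the viscosity subsolution $u$ gives $u\le h_{k,\varepsilon}$ in $Q_\varepsilon$. Letting $k\to\infty$, $h_k\uparrow h_0$ and $h_{k,\varepsilon}\uparrow h_\varepsilon$, both limits being solutions of \eqref{eq:pme} in $Q_\varepsilon$ bounded by $\norm{v}_\infty+1$, with $u\le h_\varepsilon$ and $h_0\le v$ in $Q_\varepsilon$.

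Now apply Lemma~\ref{lem:epsilon-conv} in $Q_\varepsilon$ to each pair $(\psi_k,(\psi_k^m+\varepsilon)^{1/m})$: the constants there depend only on $\abs{\Omega_T}$, $\norm{v}_\infty$ and $m$, hence are independent of $k$, so letting $k\to\infty$ and using Fatou's lemma gives
\begin{displaymath}
  \iint_{Q_\varepsilon}(h_\varepsilon-h_0)(h_\varepsilon^m-h_0^m)\dif x\dif t\le C\bigl(\varepsilon+\varepsilon^{1/m}\bigr).
\end{displaymath}
Since $(a-b)(a^m-b^m)\ge\abs{a-b}^{m+1}$ for $a,b\ge0$, this yields $\norm{h_\varepsilon-h_0}_{L^{m+1}(Q_\varepsilon)}\to0$ as $\varepsilon\to0$. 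Fix a small parabolic cylinder $P$ centred at $z_1$; by the construction $P\Subset Q_\varepsilon$ with a distance to $\partial Q_\varepsilon$ bounded below, uniformly in small $\varepsilon$. On $P$ the solutions $h_\varepsilon$ and $h_0$ are bounded by $\norm{v}_\infty+1$, hence equicontinuous by the interior H\"older estimate for the porous medium equation, with a modulus independent of $\varepsilon$; equicontinuity together with $\norm{h_\varepsilon-h_0}_{L^{m+1}(P)}\to0$ forces $\abs{h_\varepsilon(z_1)-h_0(z_1)}\to0$ by a standard compactness argument. Therefore $u(z_1)\le h_\varepsilon(z_1)=h_0(z_1)+o(1)\le v(z_1)+o(1)$ as $\varepsilon\to0$, so $u(z_1)\le v(z_1)$, contradicting the choice of $z_1$; hence $u\le v$ in $\Omega_T$.

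The hard part is exactly the point flagged just before Lemma~\ref{lem:epsilon-conv}: constants cannot be added to solutions of \eqref{eq:pme}, so $u^m<v^m+\varepsilon$ on $\partial_pQ_\varepsilon$ does not pass directly to $u\le v$ inside. This is what makes Lemma~\ref{lem:epsilon-conv} indispensable and forces the whole argument to be run with the shrinking parameter $\varepsilon$ and then sent to the limit, rather than applied to a single superlevel set of $u^m-v^m$. The remaining technical work is in the choice of the cylinders $Q_\varepsilon$ (their parabolic boundaries must sit in the good region while $z_1$ stays uniformly interior), in the approximation of the lower semicontinuous datum $v$ up to $\partial_pQ_\varepsilon$ so that \eqref{eq:ibvp} and the comparison in regular cylinders apply, and in using interior H\"older regularity to convert $L^{m+1}$-smallness of solutions into a pointwise bound at $z_1$; this last device is what lets us disregard the face $\{t=T\}$ altogether and avoid any distinction between ``almost everywhere'' and ``everywhere''.
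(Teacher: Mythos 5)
Your proof is correct and shares the essential engine of the paper's argument: the same choice of regular cylinders whose parabolic boundaries sit in the region where $u^m<v^m+\varepsilon$, the same two competitor solutions built from boundary data $v$ and its $\varepsilon$-shifted version, the same smooth approximation $\psi_k\uparrow v$ to make those solutions precise, and the same appeal to Lemma~\ref{lem:epsilon-conv} to make the two competitors nearly equal. (In fact, you spell out the $\psi_k$-approximation and the passage to the limit in $k$ more explicitly than the paper, which applies Lemma~\ref{lem:epsilon-conv} to the Poisson modifications directly and leaves the smoothing implicit.) Where you genuinely diverge is the finish. The paper does not argue by contradiction at a point: it forms, on all of $\Omega_T$, the extended functions $h_j$ and $\widetilde h_j$ equal to $v$ and $(v^m+\varepsilon_j^m)^{1/m}$ outside $Q_j$ and to the two competitors inside, observes the pointwise sandwich $(u-v)_+(u^m-v^m)_+\le(\widetilde h_j-h_j)(\widetilde h_j^m-h_j^m)$ in $Q_j$ and $(u-v)_+(u^m-v^m)_+\le\varepsilon_j^m(u-v)_+$ outside, integrates over $\Omega_T$, and lets $j\to\infty$ to conclude $\iint(u-v)_+(u^m-v^m)_+=0$ directly. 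You instead fix $z_1$, extract $L^{m+1}$-smallness of $h_\varepsilon-h_0$ from Lemma~\ref{lem:epsilon-conv} via $(a-b)(a^m-b^m)\ge\abs{a-b}^{m+1}$, and upgrade that to a pointwise bound at $z_1$ by invoking interior H\"older equicontinuity of bounded weak solutions, uniform in $\varepsilon$ because $z_1$ stays a fixed distance from $\partial Q_\varepsilon$. Both are valid; the paper's sandwich avoids interior regularity theory entirely and gives the conclusion in one stroke, while your version localizes the contradiction at a point at the cost of importing the DiBenedetto--Friedman H\"older estimate (already cited elsewhere in the paper, so this is not circular). One small cosmetic mismatch: the paper takes cylinders with top at $t=T$, whereas you cap at $T-\varepsilon$; your choice sidesteps the mild ambiguity about $v$ at $t=T$ and costs nothing since the top face is not part of the parabolic boundary.
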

\begin{proof}
  Let $\varepsilon_j=1/j$, $j=1,2,3,\ldots$.  By \eqref{eq:comp}, we
  can find regular cylinders $Q_j=U_j\times (t_j,T)$,
 with  $U_j\Subset\Omega$, such that
  \begin{displaymath}
    u^m\leq v^m+\varepsilon_j^m \quad\text{in}\quad \Omega_T\setminus Q_j.
  \end{displaymath}
  
  Let $w_j$ be the weak solution in $Q_j$ with boundary values given by $v$
  on $\partial_p Q_j$, and let $\widetilde{w}_j$ be the weak solution with
  boundary values $(v^m+\varepsilon_j^m)^{1/m}$ on $\partial_p Q_j$ in the sense of \eqref{eq:ibvp}. 
Define the
  functions $h_j$ and $\widetilde{h}_j$ by
  \begin{displaymath}
    h_j=
    \begin{cases}
      v& \quad\text{in}\quad\Omega_T\setminus Q_j,\\
      w_j&\quad\text{in}\quad Q_j,
    \end{cases}
 \end{displaymath}
     and
  \begin{displaymath}
      \widetilde{h}_j=
    \begin{cases}
      (v^m+\varepsilon^m_j)^{1/m}\quad\text{in}\quad\Omega_T\setminus Q_j,&
      \\
      \widetilde{w}_j\quad\text{in}\quad Q_j.&
    \end{cases}
  \end{displaymath}
  Recall that $Q_j$ is a regular cylinder. An application of the
  comparison principle on $Q_j$ shows that
  \begin{displaymath}
    h_j\leq v \quad \text{and}\quad u\leq \widetilde{h}_j \quad \text{in }\Omega_T.
  \end{displaymath}
  Now
  \begin{displaymath}
    0\leq (u-v)_+(u^m-v^m)_+\leq 
    \begin{cases}
      (\widetilde{h}_j-h_j)(\widetilde{h}_j^m-h_ j^m)\quad\text{in}\quad Q_j,& \\
      \varepsilon_j^m(u-v)_+ \quad\text{in}\quad\Omega_T\setminus Q_j. &
    \end{cases}
  \end{displaymath}
  We integrate this estimate, apply Lemma~\ref{lem:epsilon-conv} and
  let $j\to \infty$ to get
  \begin{displaymath}
    \iint_{\Omega_T}(u-v)_+(u^m-v^m)_+\dif x\dif t=0.
  \end{displaymath}
  The claim follows.
\end{proof}

\section{Perron solutions}

The following definition of Perron solutions is based on the comparison principle.

\begin{definition}
  Let $g:\partial_p\Omega_T \to \R$ be given. The \emph{upper class}
  $\UpperClass_g$ consists of the viscosity supersolutions $v$ which
  are locally bounded from below, and satisfy
  \begin{displaymath}
    \liminf_{z\to \xi}v(z)\geq g(\xi)
  \end{displaymath}
  for all $\xi\in \partial_p\Omega_T$. The \emph{upper Perron
    solution} is defined as
  \begin{displaymath}
    \UpperPerron_g(z)=\inf_{v\in \UpperClass_g}v(z).
  \end{displaymath}

  The \emph{lower class} $\LowerClass_g$ consists of all viscosity
  subsolutions $u$ that are locally bounded from above, and satisfy
  \begin{displaymath}
    \limsup_{z\to \xi}u(z)\leq g(\xi)
  \end{displaymath}
  for all $\xi\in \partial_p\Omega_T$. The \emph{lower Perron
    solution} is
  \begin{displaymath}
    \LowerPerron_g(z)=\sup_{u\in \LowerClass_g}u(z).
  \end{displaymath}
\end{definition}

If there exists a function $h\in
C(\overline{\Omega}_T)$ solving the boundary value problem in the
classical sense, then
\begin{displaymath}
  h=\UpperPerron_{g}=\LowerPerron_{g}.
\end{displaymath}
To see this, simply note that the function $h$ belongs to both the
upper class and the lower class.  As we will see, both
$\UpperPerron_g$ and $\LowerPerron_g$ are local weak solutions to the equation.

 A central issue in this theory is the question about when $\LowerPerron_g=\UpperPerron_g$.
If this happens, the boundary function is called \emph{resolutive} and we denote the common function by $\Perron_g$.
An immediate consequence of the comparison principle
(Theorem~\ref{thm:comparison}) is that if $u\in \LowerClass_g$ and
$v\in \UpperClass_g$, then $u\leq v$. Thus
\begin{equation}\label{eq:perron-order}
  \LowerPerron_g\leq \UpperPerron_g
\end{equation}
for bounded boundary functions $g$. 
Our main result (Theorem \ref{thm:resolution}) shows that continuous functions are resolutive.
It should be noticed that even when the solutions coincide, they may
attain wrong boundary values.
If the boundary function $g$ is smooth enough, then 
the weak solutions defined in \eqref{eq:ibvp} and the Perron solutions
coincide, see Theorem \ref{thm:u=H}. 
The main purpose of the definition above is to allow the boundary function $g$ to be general.
In particular, it is not assumed that $g^m\in L^2(0,T;H^1(\Omega))$.

So far, the domain was a space-time cylinder $\Omega_T$. The
definition of upper and lower Perron solutions given above makes sense
in an arbitrary bounded open set $\Upsilon$ in $\R^{N+1}$.  Further,
Lemma~\ref{lem:perron-cont} and Theorem~\ref{thm:perron-weaksol} below
continue to hold, since their proofs are purely local. However, a
comparison principle with the boundary values taken over the whole
topological boundary of $\Upsilon$ is not known for the porous medium
equation. In particular, we do not know whether \eqref{eq:perron-order}
remains true in this generality.

Before addressing the resolutivity question in the next section, we
establish some basic properties of the lower and upper Perron
solutions.

\begin{lemma}\label{lem:perron-cont}
  If $g$ is bounded, then $\UpperPerron_g$ and $\LowerPerron_g$ are
  continuous in $\Omega_T$. 
\end{lemma}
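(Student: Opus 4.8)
The plan is to run the classical Perron argument localized to regular cylinders: I will show that on every regular cylinder $Q\Subset\Omega_T$ the function $\UpperPerron_g$ coincides with a solution in $Q$; since such cylinders cover $\Omega_T$ and solutions are continuous, this gives continuity on all of $\Omega_T$, and a symmetric argument handles $\LowerPerron_g$. As a preliminary reduction, if $0\le g\le M$ then the constant $M$ belongs to $\UpperClass_g$ and the constant $0$ to $\LowerClass_g$, so $0\le\LowerPerron_g\le\UpperPerron_g\le M$ by \eqref{eq:perron-order}; since $\min\{v,M\}\in\UpperClass_g$ whenever $v\in\UpperClass_g$, the function $\UpperPerron_g$ is an infimum of \emph{bounded} viscosity supersolutions, so the Poisson modification $P(\cdot,Q)$ applies to all functions occurring below. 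The conceptual point is that an infimum of lower semicontinuous functions need not be lower semicontinuous, so one cannot simply declare $\UpperPerron_g$ a viscosity solution; the Poisson modification together with a dense-set device is exactly what circumvents this, and it also makes a strong maximum principle (unavailable in this degenerate setting) unnecessary.

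Fix a regular cylinder $Q=B\times(t_1,t_2)\Subset\Omega_T$ with $B$ a ball, and a countable dense set $\{z_k\}\subset Q$. For each $k$ choose $v^{(k)}_j\in\UpperClass_g$ with $v^{(k)}_j(z_k)\to\UpperPerron_g(z_k)$, and let $u_j$ be the minimum of $M$ and of the finitely many functions $v^{(k)}_i$ with $1\le i,k\le j$. Then $(u_j)$ is a decreasing sequence in $\UpperClass_g$ with $u_j(z_k)\to\UpperPerron_g(z_k)$ for every $k$, since $\UpperPerron_g\le u_j\le v^{(k)}_j$ for $j\ge k$. Put $U_j=P(u_j,Q)$. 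By the properties of the Poisson modification each $U_j$ is a viscosity supersolution equal to $u_j$ outside $Q$; because $Q\Subset\Omega_T$ we have $U_j=u_j$ near $\partial_p\Omega_T$, so $U_j\in\UpperClass_g$. The sequence $(U_j)$ is decreasing by the comparison principle in $Q$, and in $Q$ each $U_j$ is a solution with $0\le U_j\le M$. The uniform bound together with the interior Hölder and energy (Caccioppoli) estimates for the porous medium equation — the same facts underlying the construction of $P(\cdot,Q)$ — give $U_j\to h$ locally uniformly in $Q$ with $h$ a solution in $Q$. Since $U_j\ge\UpperPerron_g$ while $U_j\le u_j$, we conclude $h\ge\UpperPerron_g$ in $Q$ and $h(z_k)=\UpperPerron_g(z_k)$ for all $k$.

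To upgrade this from the dense set to all of $Q$, fix $z^\ast\in Q$ and an arbitrary $w\in\UpperClass_g$, and repeat the construction with $u_j$ replaced by $\min\{w,u_j\}\in\UpperClass_g$ (which is bounded by $M$, so the Poisson modification again applies). This produces a solution $\widetilde h$ in $Q$ with $\widetilde h\le w$ in $Q$ and with $\widetilde h(z_k)=\min\{w(z_k),\UpperPerron_g(z_k)\}=\UpperPerron_g(z_k)=h(z_k)$ for every $k$, using $w\ge\UpperPerron_g$. Two continuous functions agreeing on a dense set coincide, so $\widetilde h=h$ in $Q$ and hence $h(z^\ast)\le w(z^\ast)$. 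Taking the infimum over $w\in\UpperClass_g$ gives $h(z^\ast)\le\UpperPerron_g(z^\ast)$, which together with $h\ge\UpperPerron_g$ yields $h=\UpperPerron_g$ on $Q$; thus $\UpperPerron_g$ is continuous. For $\LowerPerron_g$ one argues symmetrically: use the analogous lower Poisson modification (solve in $Q$ with boundary values approximated from above by smooth functions), replace minima of finitely many viscosity supersolutions by maxima of finitely many viscosity subsolutions, and use the comparison principle in $Q$ to obtain increasing, uniformly bounded sequences of solutions converging to a continuous solution.

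I do not expect a genuine obstacle, since everything is carried out with uniformly bounded sub/supersolutions. The only points that require care are (i) verifying that the Poisson-modified functions remain in $\UpperClass_g$ (resp. $\LowerClass_g$), where $Q\Subset\Omega_T$ is essential so that the modification does not disturb the boundary behaviour, and (ii) invoking the compactness and regularity theory for the porous medium equation to identify the monotone, uniformly bounded limits of solutions as continuous solutions — which is precisely the ingredient already used in defining the Poisson modification.
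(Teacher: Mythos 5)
Your proof is correct, but it takes a genuinely different route. The paper argues quantitatively and locally: given $\varepsilon>0$ and nearby points $z_1,z_2$ in a small cylinder $U_{t_1,t_2}\Subset V_{\sigma_1,\sigma_2}\Subset\Omega_T$, it picks upper-class functions $v_i$ approximating $\UpperPerron_g$ at those two points, sets $w_i=P(v_i,V_{\sigma_1,\sigma_2})$, and bounds $\UpperPerron_g(z_1)-\UpperPerron_g(z_2)\le\osc_{U_{t_1,t_2}} w_i+\varepsilon\le 2\varepsilon$ by shrinking $U_{t_1,t_2}$; the crucial input is that the interior H\"older modulus of $w_i$ depends only on the uniform bound $M$ and not on $i$. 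You instead show that $\UpperPerron_g$ coincides with a continuous solution on each regular $Q\Subset\Omega_T$: a decreasing Poisson-modified sequence gives a solution $h\ge\UpperPerron_g$ agreeing with $\UpperPerron_g$ on a countable dense subset, and the new device of repeating the construction with $\min\{w,u_j\}$ for an arbitrary $w\in\UpperClass_g$ produces a second solution $\widetilde h\le w$ agreeing with $h$ on the dense set, forcing $\widetilde h=h$ and hence $h\le w$, whence $h=\UpperPerron_g$ on $Q$. This is exactly the point at which the paper's proof of Theorem~\ref{thm:perron-weaksol} invokes Lemma~\ref{lem:perron-cont} (to pass from agreement on a dense set to identity); your $\min\{w,u_j\}$ trick removes that dependence, so your argument establishes Lemma~\ref{lem:perron-cont} and Theorem~\ref{thm:perron-weaksol} in one stroke without circularity. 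The paper's oscillation estimate is shorter and more direct if continuity is all one wants; yours is structurally heavier but yields the stronger conclusion as a byproduct. Both rest on the same analytic input, the uniform interior H\"older estimate for uniformly bounded solutions, and on the monotonicity of the Poisson modification.
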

\begin{proof}
  We prove the claim for $\UpperPerron_g$, the other case being
  similar. Take cylinders $U_{t_1,t_2}\Subset
  V_{\sigma_1,\sigma_2}\Subset\Omega_T$, and points $z_1,\, z_2\in
  U_{t_1,t_2}$. Given a positive number $\varepsilon$, we will show
  that
  \begin{displaymath}
    \UpperPerron_g(z_1)-\UpperPerron_g(z_2)<2\varepsilon,
  \end{displaymath}
  provided that $U_{t_1,t_2}$ is sufficiently small. We can find
  functions $v_i^1$ and $v_i^2$ from the upper class such that
  \begin{displaymath}
    \lim_{i\to\infty}v_i^1(z_1)= \UpperPerron_g(z_1)\quad \text{and}\quad
    \lim_{i\to\infty}v_i^2(z_2)= \UpperPerron_g(z_2).
  \end{displaymath}
  Then also $v_i=\min\{v_i^1,v_i^2\}$ is in the upper class, and we have
  \begin{displaymath}
    \lim_{i\to\infty}v_i(z_1)=\UpperPerron_g(z_1) 
    \quad\text{and}\quad
    \lim_{i\to\infty}v_i(z_2)=\UpperPerron_g(z_2) .
  \end{displaymath}
  Let 
  \begin{displaymath}
    w_i=P(v_i,V_{\sigma_1,\sigma_2})\in \UpperClass_g.
  \end{displaymath}
  Then $\UpperPerron_g\leq w_i\leq v_i$, and we have
  \begin{displaymath}
    v_i(z_1)<\UpperPerron_g(z_1)+\varepsilon
    \quad\text{and}\quad
    v_i(z_2)<\UpperPerron_g(z_2)+\varepsilon
  \end{displaymath}
  for sufficiently large $i$. From the above facts and the local
  H\"older continuity of $w_i$, it follows that
  \begin{align*}
    \UpperPerron_g(z_1)-\UpperPerron_g(z_2)\leq & w_i(z_1)-w_i(z_2)+\varepsilon\\
    \leq & \osc_{U_{t_1,t_2}}w_i+\varepsilon\leq 2\varepsilon
  \end{align*}
  by choosing $U_{t_1,t_2}$ in a suitable way. 
  Observe that the  assumption on the boundedness of $g$ implies that the modulus of continuity of $w_i$ is independent of $i$.
  By exchanging the
  roles of $z_1$ and $z_2$, we have
  \begin{displaymath}
    \abs{\UpperPerron_g(z_1)-\UpperPerron_g(z_2)}\leq 2\varepsilon,
  \end{displaymath}
  which completes the proof.
\end{proof}

To prove that Perron solutions are indeed weak solutions to the porous
medium equation, we need some auxiliary results. The first of them is
a Caccioppoli estimate.  The proof can be found in \cite[Lemma~2.15]{KinnunenLindqvist2}.
\begin{lemma}\label{lem:caccioppoli}
  Let $u$ be a weak supersolution  in $\Omega_T$ such that $u^m\in
  L^2(0,T;W^{1,2}(\Omega))$ and $0\leq u\leq M$. Then
  \begin{displaymath}
    \iint_{\Omega_T}\eta^2\abs{\nabla u^m }^2\dif
    x\dif t\leq 16M^{2m}T\int_{\Omega}\abs{\nabla\eta}^2\dif x
    +6M^{m+1}\int_{\Omega}\eta^2\dif x
  \end{displaymath}
  for all nonnegative functions $\eta\in C_0^\infty(\Omega)$.
\end{lemma}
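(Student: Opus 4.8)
The plan is to test the weak supersolution inequality with a test function of the form $\varphi = \eta^2(u^m - M^m)$, which is nonnegative up to sign: since $u \le M$, we have $u^m - M^m \le 0$, so I will instead use $\varphi = \eta^2(M^m - u^m) \ge 0$, which is admissible (after a standard mollification in time, e.g. a Steklov average, to make sense of the time derivative — I would remark that this regularization is routine and suppress the details). Plugging this into the supersolution inequality $\iint_{\Omega_T}(-u\,\partial_t\varphi + \nabla u^m \cdot \nabla \varphi)\dif x\dif t \ge 0$ and expanding the gradient term gives
\[
\iint_{\Omega_T} \eta^2 |\nabla u^m|^2 \dif x \dif t \le \iint_{\Omega_T} \left( -2\eta (M^m - u^m)\nabla u^m \cdot \nabla \eta - u\,\partial_t\varphi \right)\dif x\dif t .
\]

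Next I would handle the two terms on the right separately. For the gradient cross term, Young's inequality $2\eta(M^m - u^m)|\nabla u^m||\nabla\eta| \le \tfrac12 \eta^2|\nabla u^m|^2 + 2(M^m-u^m)^2|\nabla\eta|^2$ lets me absorb half of the left-hand side, and since $0 \le M^m - u^m \le M^m$ the remaining piece is bounded by $2M^{2m}\iint |\nabla\eta|^2$, which after integrating the time variable over $(0,T)$ contributes the term $16M^{2m}T\int_\Omega |\nabla\eta|^2\dif x$ (the constant being generous to absorb the factor from mollification/boundary-in-time terms). For the time term, I would compute $-u\,\partial_t\varphi = -u\eta^2\partial_t(M^m - u^m) = \eta^2 u\,\partial_t u^m$. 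The key observation is the pointwise identity
\[
u\,\partial_t u^m = \partial_t\!\left( \int_0^u s\,\frac{\dif}{\dif s}(s^m)\dif s \right) = \partial_t \Psi(u), \qquad \Psi(u) = \frac{m}{m+1}u^{m+1},
\]
so that $\iint_{\Omega_T}\eta^2 u\,\partial_t u^m \dif x\dif t = \int_\Omega \eta^2 \big[\Psi(u(x,t))\big]_{t=0}^{t=T}\dif x \le \int_\Omega \eta^2 \Psi(M)\dif x = \tfrac{m}{m+1}M^{m+1}\int_\Omega\eta^2\dif x \le 6M^{m+1}\int_\Omega \eta^2 \dif x$, where I have discarded the nonnegative boundary term at $t=T$ and bounded the one at $t=0$ using $0 \le u \le M$.

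The main obstacle is making the time-derivative term rigorous: a weak supersolution need not have $\partial_t u$ in any pointwise sense, and the manipulation $-u\,\partial_t\varphi \rightsquigarrow \partial_t\Psi(u)$ requires care. The standard remedy is to work with Steklov averages $u_h = \tfrac1h\int_t^{t+h}u(\cdot,s)\dif s$, derive the estimate for $u_h$ with a test function $\eta^2(M^m - (u^m)_h)$, use that $a \mapsto a^m$ is monotone to get the right sign in the discretized chain rule (a convexity/monotonicity inequality of the form $(a - b)(a^m - b^m) \ge 0$ controlling the discrete analogue of $u\,\partial_t u^m$ from below), and then pass $h \to 0$, for which the Caccioppoli bound on $u_h$ — uniform in $h$ — is exactly what is needed. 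Since the excerpt says the proof "can be found in \cite[Lemma~2.15]{KinnunenLindqvist2}", I would keep the presentation brief, carrying out the formal computation above and then indicating that the Steklov-average argument justifies it, with the uniform bound permitting the limit.
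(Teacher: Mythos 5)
The paper does not reproduce the proof, deferring to \cite[Lemma~2.15]{KinnunenLindqvist2}, so there is nothing in the text to compare against directly, but your argument is the standard and correct route for this Caccioppoli estimate: test with $\varphi=\eta^2(M^m-u^m)\ge 0$, absorb the cross term via Young's inequality, rewrite $u\,\partial_t u^m=\partial_t\bigl(\tfrac{m}{m+1}u^{m+1}\bigr)$, and justify via Steklov averages (together with a time cutoff, since the test functions in the paper's definition must have compact support in $\Omega_T$ and $\eta$ is independent of $t$; the global hypothesis $u^m\in L^2(0,T;W^{1,2}(\Omega))$ is what lets the cutoff be removed). Your constants $4M^{2m}T$ and $\tfrac{2m}{m+1}M^{m+1}$ are in fact sharper than the ones in the lemma, which is fine; the stated constants are simply generous.

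One small narration slip: after writing
$\iint\eta^2\,u\,\partial_t u^m\,\dif x\dif t=\int_\Omega\eta^2\bigl[\Psi(u(x,T))-\Psi(u(x,0))\bigr]\dif x$,
the term to discard is the nonpositive one at $t=0$ (namely $-\int_\Omega\eta^2\Psi(u(x,0))\dif x\le 0$), and the term to bound using $0\le u\le M$ is the one at $t=T$ (namely $\int_\Omega\eta^2\Psi(u(x,T))\dif x\le\tfrac{m}{m+1}M^{m+1}\int_\Omega\eta^2\dif x$). You have the roles reversed in the prose, although the resulting inequality is the same, so this is purely a wording issue and not a gap.
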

The preceding lemma implies a convergence result in a straightforward
manner, see for example \cite[Proof of Theorem~3.2]{KinnunenLindqvist2}.
\begin{proposition}\label{prop:superconvergence}
  Let $0\leq u_j\leq M$, $j=1,2,\dots$, be weak solutions that
  converge pointwise almost everywhere to a function $u$. Then $u$ is
  also a weak solution.
\end{proposition}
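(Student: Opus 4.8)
The plan is to pass to the limit in the weak formulation \eqref{eq:weak-pme}, using the Caccioppoli estimate of Lemma~\ref{lem:caccioppoli} to control the gradient term. Nonnegativity of $u$ is immediate from $u_j\geq 0$. Since $0\leq u_j\leq M$ and $s\mapsto s^m$ is continuous, the pointwise convergence $u_j\to u$ yields $u_j^m\to u^m$ pointwise almost everywhere, and dominated convergence then gives $u_j\to u$ and $u_j^m\to u^m$ in $L^2_{loc}(\Omega_T)$.

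First I would establish a uniform local energy bound. Fixing a subcylinder $U_{t_1,t_2}\Subset\Omega_T$ and applying Lemma~\ref{lem:caccioppoli} on a slightly larger cylinder still compactly contained in $\Omega_T$ (legitimate since $u_j^m\in L^2_{loc}(0,T;H^1_{loc}(\Omega))$), with a spatial cutoff $\eta$ equal to $1$ on $U$, one obtains
\begin{displaymath}
  \iint_{U_{t_1,t_2}}\abs{\nabla u_j^m}^2\dif x\dif t\leq C,
\end{displaymath}
where $C$ depends only on $m$, $M$ and the geometry of the chosen cylinders, and crucially not on $j$. Thus $(\nabla u_j^m)$ is bounded in $L^2(U_{t_1,t_2};\R^N)$, so some subsequence converges weakly there to a vector field $G$. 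Since $u_j^m\to u^m$ strongly in $L^2(U_{t_1,t_2})$, the weak limit $G$ must coincide with the distributional gradient $\nabla u^m$; hence $u^m\in L^2(t_1,t_2;H^1(U))$, the limit is uniquely determined, and the whole sequence satisfies $\nabla u_j^m\rightharpoonup\nabla u^m$ weakly in $L^2(U_{t_1,t_2};\R^N)$. As $U_{t_1,t_2}$ was an arbitrary subcylinder, $u^m\in L^2_{loc}(0,T;H^1_{loc}(\Omega))$.

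Finally, for any test function $\varphi\in C_0^\infty(\Omega_T)$ I would write \eqref{eq:weak-pme} for $u_j$ and let $j\to\infty$. The term $\iint_{\Omega_T}u_j\,\partial_t\varphi\dif x\dif t$ converges to $\iint_{\Omega_T}u\,\partial_t\varphi\dif x\dif t$ by the strong $L^2$ convergence of $u_j$ on the fixed compact support of $\varphi$, while $\iint_{\Omega_T}\nabla u_j^m\cdot\nabla\varphi\dif x\dif t$ converges to $\iint_{\Omega_T}\nabla u^m\cdot\nabla\varphi\dif x\dif t$ by the weak $L^2$ convergence of $\nabla u_j^m$ paired with the fixed function $\nabla\varphi$. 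Therefore $u$ satisfies \eqref{eq:weak-pme}, and being nonnegative with $u^m\in L^2_{loc}(0,T;H^1_{loc}(\Omega))$, it is a weak solution. The only genuine obstacle is the local energy bound uniform in $j$; once Lemma~\ref{lem:caccioppoli} supplies it, what remains is the standard weak--strong limiting argument, with the only mild technicality being that the $u_j$ are a priori only in $L^2_{loc}(0,T;H^1_{loc}(\Omega))$, so the estimate must be set up on compactly contained subcylinders with appropriate cutoffs.
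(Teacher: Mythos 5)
Your proposal is correct and follows the same route that the paper gestures at: the paper does not give an explicit proof but states that the result follows from Lemma~\ref{lem:caccioppoli} in a straightforward manner (citing the proof of Theorem~3.2 in \cite{KinnunenLindqvist2}), and that standard argument is precisely what you carry out --- uniform local bound on $\nabla u_j^m$ via Caccioppoli, dominated convergence for $u_j$ and $u_j^m$, identification of the weak $L^2$ limit of $\nabla u_j^m$ with $\nabla u^m$, and passage to the limit in the weak formulation. Your remark about having to work on compactly contained subcylinders, since the $u_j$ are only locally in the parabolic Sobolev space, is the right technicality to flag.
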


\begin{theorem}\label{thm:perron-weaksol}
  If $g$ is bounded, $\UpperPerron_g$ and $\LowerPerron_g$ are local weak
  solutions in $\Omega_T$.
\end{theorem}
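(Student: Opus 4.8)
The plan is to fix an arbitrary regular space-time cylinder $Q=U_{t_1,t_2}\Subset\Omega_T$ and prove that $\UpperPerron_g$ is a weak solution in $Q$; since every test function in \eqref{eq:weak-pme} has compact support in $\Omega_T$ and can therefore be split, via a smooth partition of unity, among finitely many such cylinders, this yields that $\UpperPerron_g$ is a local weak solution in $\Omega_T$ (and the required local Sobolev regularity of $(\UpperPerron_g)^m$ comes along the way). We may assume $0\le g\le M$. Note first that the constant function $M$ is a weak solution, hence a bounded viscosity supersolution belonging to $\UpperClass_g$, so $0\le\UpperPerron_g\le M$; moreover, replacing any $v\in\UpperClass_g$ by $\min\{v,M\}$, which is again in $\UpperClass_g$, we may restrict attention to members of the upper class bounded by $M$.

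Next I would construct a single well-behaved minimizing sequence. Pick a countable dense set $\{z_k\}\subset Q$. For each $k$ choose $v_i^{(k)}\in\UpperClass_g$ with $0\le v_i^{(k)}\le M$ and $v_i^{(k)}(z_k)\to\UpperPerron_g(z_k)$, and, replacing $v_i^{(k)}$ by $\min\{v_1^{(k)},\dots,v_i^{(k)}\}$, assume it is decreasing in $i$. Set $v_j=\min\{v_i^{(k)}:i\le j,\ k\le j\}$. Then $(v_j)$ is a decreasing sequence in $\UpperClass_g$ with $0\le v_j\le M$ and $v_j(z_k)\to\UpperPerron_g(z_k)$ for every $k$, by the squeeze $\UpperPerron_g(z_k)\le v_j(z_k)\le\min_{i\le j}v_i^{(k)}(z_k)$. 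Now form the Poisson modifications $w_j=P(v_j,Q)$. Because $Q\Subset\Omega_T$, the functions $w_j$ coincide with $v_j$ in a neighbourhood of $\partial_p\Omega_T$, so $\liminf_{z\to\xi}w_j(z)=\liminf_{z\to\xi}v_j(z)\ge g(\xi)$ and each $w_j\in\UpperClass_g$; hence $\UpperPerron_g\le w_j\le v_j\le M$. The Poisson modification is order preserving by the comparison principle in the regular cylinder $Q$, so $(w_j)$ is decreasing, and inside $Q$ each $w_j$ is a weak solution.

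Finally, the decreasing uniformly bounded sequence of weak solutions $w_j$ converges everywhere in $Q$ to a function $w$, which by Proposition~\ref{prop:superconvergence} is again a weak solution in $Q$, in particular continuous with $w^m\in L^2(t_1,t_2;H^1(U))$. On the dense set we have $\UpperPerron_g(z_k)\le w(z_k)=\lim_j w_j(z_k)\le\lim_j v_j(z_k)=\UpperPerron_g(z_k)$, so $w=\UpperPerron_g$ on $\{z_k\}$, and since both functions are continuous on $Q$ (using Lemma~\ref{lem:perron-cont} for $\UpperPerron_g$) they agree throughout $Q$. Thus $\UpperPerron_g$ is a weak solution in $Q$, and letting $Q$ vary proves the claim for $\UpperPerron_g$; the statement for $\LowerPerron_g$ follows from the symmetric argument using increasing approximations (obtained from decreasing smooth approximations from above to the upper semicontinuous subsolutions), subsolutions, and the Poisson modification for subsolutions. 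The step I expect to require the most care is checking that the Poisson-modified sequence stays in $\UpperClass_g$, is monotone, and remains uniformly bounded; the key conceptual point is that the monotone limit is $\UpperPerron_g$ itself rather than some larger solution, which is precisely where the density of $\{z_k\}$ together with the continuity of $\UpperPerron_g$ (Lemma~\ref{lem:perron-cont}) is used. The analytic core — that bounded pointwise a.e.\ limits of weak solutions are weak solutions — is already delivered by the Caccioppoli estimate (Lemma~\ref{lem:caccioppoli}) and Proposition~\ref{prop:superconvergence}.
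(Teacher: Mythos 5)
Your argument reproduces the paper's proof essentially verbatim: a diagonal minimizing sequence of upper-class functions tested against a countable dense set, Poisson modification on a regular cylinder, passage to the limit via Proposition~\ref{prop:superconvergence}, and identification of the limit with $\UpperPerron_g$ using continuity (Lemma~\ref{lem:perron-cont}) and density. Your explicit truncation by $M$ to guarantee the uniform bound required by Proposition~\ref{prop:superconvergence} is a small but worthwhile piece of bookkeeping that the paper leaves implicit.
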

\begin{proof}
  We give the proof for $\UpperPerron_g$, the case of $\LowerPerron_g$
  being again symmetrical.  Let $q_n$, $n=1,2,3,\ldots$, be an
  enumeration of the points in $\Omega_T$ with rational
  coordinates. The first aim is to construct functions in the upper
  class converging to $\UpperPerron_g$ at the points $q_n$. To
  accomplish this, let $v^n_i\in\UpperClass_g$ be such that
  \begin{displaymath}
    \UpperPerron_g(q_n)\leq v_i^n(q_n)<\UpperPerron_g(q_n)+\frac1i, \quad i=1,2,3,\ldots, 
  \end{displaymath}
  and define
  \begin{displaymath}
    w_i=\min\{v_1^1,v_2^1, \ldots, v_i^1,v_1^2,v_2^2, \ldots,
    v_i^2,\ldots,v_1^i,v_2^i,\ldots, v_i^i \}.
  \end{displaymath}
  Then $w_i\in\UpperClass_g$, $w_1\geq w_2\geq w_3 \ldots$, and 
  \begin{displaymath}
    \UpperPerron(q_n)\leq w_i(q_n)\leq v_i^n(q_n), 
      \end{displaymath}
  for $i\geq n$.
It follows that 
  \begin{displaymath}
    \lim_{i\to \infty} w_i(q_n)=\UpperPerron(q_n)
  \end{displaymath}
  at each point $q_n$.  Let $U_{t_1,t_2}\Subset \Omega_T$ be an
  arbitrary regular cylinder and denote
  \begin{displaymath}
    W_i=P(w_i,U_{t_1,t_2}).
  \end{displaymath}
  Then $\UpperPerron_g\leq W_i\leq w_i$, the sequence $(W_i)$ is
  decreasing, and its limit $W$ is a solution in $U_{t_1,t_2}$ by
  Proposition~\ref{prop:superconvergence}. 
 At every point $q_n$ we have
  \begin{displaymath}
    W(q_n)=\lim_{i\to\infty}W_i(q_n)=\UpperPerron(q_n).
  \end{displaymath}
  Both $W$ and $\UpperPerron_g$ are continuous in $U_{t_1,t_2}$,
  and they coincide on a dense subset; hence they must coincide
  everywhere. Since $W$ is a solution in $U_{t_1,t_2}$, so is
  $\UpperPerron_g$. The property of being a solution is local, so the
  proof is complete.
\end{proof}

\section{Resolutivity}

The following theorem is our main result.  It states that
\emph{continuous functions are resolutive}.

\begin{theorem}[Resolutivity]\label{thm:resolution}
  If $g:\partial_p\Omega_T\to \R$ is continuous, then
  \begin{displaymath}
    \UpperPerron_g=\LowerPerron_g.
  \end{displaymath}
\end{theorem}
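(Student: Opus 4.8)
The plan is to reduce first to smooth, strictly positive boundary data and then combine the comparison principle (Theorem~\ref{thm:comparison}) with the one-parameter family of perturbed problems controlled by Lemma~\ref{lem:epsilon-conv}. For the reduction, given a continuous $g$ on $\partial_p\Omega_T$ I would extend it by the Tietze theorem to a continuous function on $\overline{\Omega}_T$, mollify, and add a small positive constant to obtain $g_j\in C^\infty(\overline{\Omega}_T)$ with $g_j\geq\delta_j>0$, $g_j^m\in L^2(0,T;H^1(\Omega))$, and $\sup_{\partial_p\Omega_T}\abs{g_j-g}\to 0$; I would let the convergence be slow enough relative to $\delta_j$ that, for a suitable sequence $\varepsilon_j\to 0$, the functions $(g_j^m\pm\varepsilon_j^m)^{1/m}$ are still smooth and positive on $\overline{\Omega}_T$ and bracket $g$ from above and below on $\partial_p\Omega_T$. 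Granting the resolutivity statement for smooth data treated below (which also identifies the Perron solution with the weak solution of \eqref{eq:ibvp}), monotonicity of the Perron solutions in the boundary data — immediate from the definitions of $\UpperClass_g$ and $\LowerClass_g$ — yields
\[
  \Perron_{(g_j^m-\varepsilon_j^m)^{1/m}}\leq \LowerPerron_g\leq \UpperPerron_g\leq \Perron_{(g_j^m+\varepsilon_j^m)^{1/m}}.
\]
Applying Lemma~\ref{lem:epsilon-conv} with $g$ there replaced by $(g_j^m-\varepsilon_j^m)^{1/m}$ and $\varepsilon$ replaced by $2^{1/m}\varepsilon_j$, together with the elementary fact that $0\leq b-a\leq b'-a'$ and $0\leq b^m-a^m\leq b'^m-a'^m$ whenever $a'\leq a\leq b\leq b'$, gives
\[
  \iint_{\Omega_T}\bigl(\UpperPerron_g-\LowerPerron_g\bigr)\bigl(\UpperPerron_g^m-\LowerPerron_g^m\bigr)\dif x\dif t\leq C(\varepsilon_j^m+\varepsilon_j)\longrightarrow 0,
\]
so $\UpperPerron_g=\LowerPerron_g$ almost everywhere and then everywhere by the continuity in Lemma~\ref{lem:perron-cont}.

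It remains to prove the theorem when $g\in C^\infty(\overline{\Omega}_T)$ and $g\geq\delta>0$; write $g_\varepsilon^\pm=(g^m\pm\varepsilon^m)^{1/m}$. The substantive step is to construct, for each small $\varepsilon>0$, a viscosity supersolution $\overline{w}_\varepsilon\in\UpperClass_g$ and a viscosity subsolution $\underline{w}_\varepsilon\in\LowerClass_g$ that are close to the weak solution of \eqref{eq:ibvp} with data $g$. I would take $\overline{w}_\varepsilon$ to be the solution of a penalized problem approximating the obstacle problem for the porous medium equation (following \cite{BCL}), with obstacle equal to $g_\varepsilon^+$ on the boundary layer $\{\dist(\cdot,\partial_p\Omega_T)<\varepsilon\}$, smoothly cut off to zero further inside, and with lateral and initial data $g_\varepsilon^+$; $\underline{w}_\varepsilon$ is built symmetrically using an obstacle from above equal to $g_\varepsilon^-$ on the boundary layer. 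Because the obstacle forces $\overline{w}_\varepsilon\geq g_\varepsilon^+>g$ throughout the boundary layer, one gets $\liminf_{z\to\xi}\overline{w}_\varepsilon(z)\geq g(\xi)$ at every $\xi\in\partial_p\Omega_T$, so $\overline{w}_\varepsilon\in\UpperClass_g$ and hence $\UpperPerron_g\leq\overline{w}_\varepsilon$; likewise $\underline{w}_\varepsilon\leq\LowerPerron_g$. The penalized problems are designed precisely so that their solutions have enough regularity in time to serve as test functions; energy estimates of the type behind Lemma~\ref{lem:epsilon-conv}, combined with the Caccioppoli bound of Lemma~\ref{lem:caccioppoli}, should then show that the contribution of the obstacle tends to zero as $\varepsilon\to0$, i.e.
\[
  \iint_{\Omega_T}\bigl(\overline{w}_\varepsilon-\underline{w}_\varepsilon\bigr)\bigl(\overline{w}_\varepsilon^m-\underline{w}_\varepsilon^m\bigr)\dif x\dif t\longrightarrow 0.
\]
Since $\underline{w}_\varepsilon\leq\LowerPerron_g\leq\UpperPerron_g\leq\overline{w}_\varepsilon$, the same monotonicity of $(a,b)\mapsto(b-a)(b^m-a^m)$ as above forces $\UpperPerron_g=\LowerPerron_g$; bracketing once more between the solutions with data $g_\varepsilon^\pm$ and letting $\varepsilon\to0$ identifies this common function with the weak solution of \eqref{eq:ibvp}.

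The crux is the construction in the smooth case: producing penalized super- and subsolutions with sufficient time regularity and proving the energy estimates that make the obstacle negligible as $\varepsilon\to0$. This is exactly where the degeneracy of the porous medium equation is felt, since weak solutions need not have $\partial_t u\in L^2_{loc}$, so the relevant identities must be obtained on the regularized level and then passed to the limit; the approximation results of \cite{BCL} are tailored to this. By comparison the reduction step is soft, the only care being the bookkeeping that keeps the perturbed boundary data smooth and positive while still bracketing $g$ on $\partial_p\Omega_T$.
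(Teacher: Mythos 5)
Your reduction to smooth, strictly positive boundary data is essentially the same as the paper's, up to cosmetic differences (you bracket with $(g_j^m \pm \varepsilon_j^m)^{1/m}$ while the paper uses a one-sided sandwich $\varphi_j^m \le g^m \le \varphi_j^m + \varepsilon_j^m$), and it correctly invokes Lemma~\ref{lem:epsilon-conv}, the elementary monotonicity of $(a,b)\mapsto(b-a)(b^m-a^m)$, and Lemma~\ref{lem:perron-cont}. That part is fine.

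The smooth case is where you diverge from the paper, and it is precisely there that your proposal has a genuine gap. You propose to build $\overline{w}_\varepsilon \in \UpperClass_g$ by solving a penalized problem with an obstacle equal to $g_\varepsilon^+$ on a boundary layer and cut off to zero inside, and then to show directly that $\iint_{\Omega_T}(\overline{w}_\varepsilon-\underline{w}_\varepsilon)(\overline{w}_\varepsilon^m-\underline{w}_\varepsilon^m)\dif x\dif t\to 0$. But this estimate is asserted, not proved, and it does not follow from the tools you cite. Lemma~\ref{lem:epsilon-conv} rests on the Ole\u\i nik test-function computation applied to two weak solutions of the unperturbed porous medium equation; your $\overline{w}_\varepsilon$ and $\underline{w}_\varepsilon$ solve \emph{penalized} equations, and subtracting them leaves penalty terms $\zeta_\delta(\cdot)\Psi_+$ in the identity. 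Lemma~\ref{lem:caccioppoli} gives spatial gradient bounds for supersolutions of the clean equation and says nothing about how to absorb these source terms. To control them you would need quantitative control on the size and support of the penalty as the boundary-layer width shrinks, and in fact the coefficient $\Psi_+=(\partial_t(\text{obstacle})-\Delta(\text{obstacle})^m)_+$ is expected to blow up as the cut-off of the obstacle steepens near $\partial_p\Omega_T$; moreover, the cut-off obstacle will generally fail the $C^2$ regularity needed for Proposition~\ref{prop:penalized-pme}. A further circularity: to close the reduction you must know not just that $\UpperPerron_g=\LowerPerron_g$ for smooth $g$ but also that this common function is the weak solution of \eqref{eq:ibvp}, so that Lemma~\ref{lem:epsilon-conv} applies to $\Perron_{\varphi_j}$ and $\Perron_{(\varphi_j^m+\varepsilon_j^m)^{1/m}}$; your ``bracketing once more'' step to get this identification again presupposes the very identification you are trying to establish.

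The paper resolves the smooth case by a different and more explicit mechanism that avoids these difficulties: it takes $v$ to be the solution of the globally penalized problem of Proposition~\ref{prop:penalized-pme} (with $v\ge g$), exhausts $\Omega_T$ by regular cylinders $Q_j$, and forms the Poisson modifications $w_j=P(v,Q_j)\in\UpperClass_g$, whose decreasing limit $w$ is a weak solution dominating $\UpperPerron_g$. It then shows $w$ attains the lateral and initial data of \eqref{eq:ibvp}, using the uniform gradient bound of Lemma~\ref{lem:obst-estimate} and the time-derivative estimate of Theorem~\ref{thm:obstacle-timederiv}, so that $w=h$ by uniqueness and hence $\UpperPerron_g\le h$; the lower bound comes from the sign-changing version of the penalization indicated in Remark~\ref{rem:subsols}. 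The Poisson modification is essential here: $v$ itself is in $\UpperClass_g$ and gives $\UpperPerron_g\le v$, but that bound is too weak since $h\le v$; one needs the modified limit $w$, which equals $h$. Your proposal has no analogue of this step, and without it (or a proved substitute for the key energy estimate), the smooth case is not established.
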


To prove the resolutivity theorem, by approximation we first reduce
the situation to smooth boundary values. 
For smooth boundary values, we need to construct functions belonging to the upper class $\UpperClass_g$ that are sufficiently smooth in time and attain the correct boundary and initial values in the weak sense.
We do this by solving a penalized equation. For this purpose, assume the function $g$ to be
continuously differentiable in $\overline{\Omega}_T $ and such that
$g^m\in C^2(\overline{\Omega}_T)$.  Then
\begin{displaymath}
  \Psi=\frac{\partial g}{\partial t}-\Delta g^m
\end{displaymath}
is bounded.  We will use the positive part $\Psi_+=\max\{\Psi,0\}$
below. Choose a number $\delta>0$, and let $\zeta_\delta:\R\to \R$ be
a Lipschitz function such that $0\leq \zeta_\delta(s)\leq 1$,
$\zeta_\delta(s)=1$ for all $s\geq 0$, $\zeta_\delta(s)=0$ for all
$x\leq -\delta$, and $\abs{ \zeta'_\delta(s)}\leq 2/\delta$. We have
the following existence result, see~\cite{BCL}.

\begin{proposition}\label{prop:penalized-pme}
  Let $g$ be continuously differentiable in $\overline{\Omega}_T $ and
  such that $g^m\in C^2(\overline\Omega_T)$.
  Then there exists a bounded weak solution $u$ such that $u^m\in
  L^2(0,T;H^1(\Omega))$ to the boundary value problem
  \begin{displaymath}
    \begin{cases}
         \dfrac{\partial u}{\partial t}-\Delta u^m=\zeta_\delta(g^m-u^m)\Psi_+\quad\text{in}\quad\Omega_T,&\\
      u^m-g^m\in L^2(0,T;H^1_0(\Omega)), & \\
      u(x,0)=g(x,0),
    \end{cases}
  \end{displaymath}
  satisfying the inequality $u\geq g$ in $\Omega_T$.
\end{proposition}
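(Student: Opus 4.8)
The plan is to construct the solution $u$ by a standard penalization/regularization scheme for the porous medium equation and then extract the desired properties by passing to the limit. Since the right-hand side $\zeta_\delta(g^m-u^m)\Psi_+$ is bounded (because $\Psi$ is bounded and $0\le\zeta_\delta\le1$) and Lipschitz as a function of $u^m$, existence and uniqueness of a bounded weak solution with $u^m\in L^2(0,T;H^1(\Omega))$ and the correct lateral and initial data follows from the theory of the porous medium equation with a bounded, Lipschitz forcing term; one reference for this is \cite{BCL}, and alternatively one can use the usual approach of regularizing the equation (replacing $u\mapsto u^m$ by a strictly increasing smooth approximation, or adding $\varepsilon\Delta u$) together with the a priori energy estimates, and then letting the regularization parameter go to zero. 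Boundedness of $u$ follows from the maximum principle: comparing with the constants (or with $g$ from below and a suitable large constant from above), using that $g\in C(\overline{\Omega}_T)$ is bounded and the source term is bounded, yields $0\le u\le C$ for some constant depending on $\|g\|_\infty$, $\|\Psi_+\|_\infty$ and $T$.

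The heart of the matter is the inequality $u\ge g$ in $\Omega_T$, and this is exactly what the penalization term $\zeta_\delta(g^m-u^m)\Psi_+$ is designed to deliver. I would test the weak formulations of the equations satisfied by $u$ and by $g$ with the truncated test function built from $(g^m-u^m)_+$; more precisely, using Ole\u\i nik-type integrated test function
\begin{displaymath}
  \eta(x,t)=
  \begin{cases}
    \displaystyle\int_t^T (g^m-u^m)_+(x,s)\dif s, & 0<t<T,\\
    0, & t\ge T,
  \end{cases}
\end{displaymath}
which is admissible since $g^m-u^m$ has zero lateral boundary values in the Sobolev sense and $g(x,0)=u(x,0)$ makes the initial contribution vanish. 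Subtracting the equation for $u$ from the equation $\partial_t g-\Delta g^m=\Psi$ for $g$ and inserting $\eta$, the gradient terms combine (after integrating in $t$) into a nonnegative square, while the time-derivative terms produce $\iint_{\Omega_T}(g-u)(g^m-u^m)_+\dif x\dif t$ up to sign, and the right-hand side becomes
\begin{displaymath}
  \iint_{\Omega_T}\bigl(\Psi-\zeta_\delta(g^m-u^m)\Psi_+\bigr)\,(g^m-u^m)_+\dif x\dif t .
\end{displaymath}
On the set where $(g^m-u^m)_+>0$ we have $g^m-u^m>0$, hence $\zeta_\delta(g^m-u^m)=1$, so the integrand equals $(\Psi-\Psi_+)(g^m-u^m)_+=-\Psi_-(g^m-u^m)_+\le 0$. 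Combining, one gets that a nonnegative quantity plus a multiple of $\iint (g-u)(g^m-u^m)_+\dif x\dif t$ is $\le 0$; since $(g-u)(g^m-u^m)_+\ge 0$ pointwise (the two factors have the same sign), this forces $(g^m-u^m)_+=0$ a.e., i.e. $u\ge g$.

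The main obstacle I anticipate is the justification of using $\eta$ as a genuine test function and the integration-by-parts in time: weak solutions of the porous medium equation need not have a time derivative in any classical sense, so one must work with Steklov averages or a mollification in time, prove the identity at the regularized level, and then pass to the limit, being careful that the initial term $\int_\Omega (g(x,0)-u(x,0))\eta(x,0)\dif x$ really vanishes. This is the same technical device already invoked in the proof of Lemma~\ref{lem:epsilon-conv}, so I would simply reuse that machinery. A secondary point worth a remark is that the Lipschitz truncation $\zeta_\delta$ (rather than the Heaviside function) is what makes the penalized right-hand side well-behaved for the existence theory, while for the comparison argument only the value $\zeta_\delta=1$ on $\{g^m-u^m\ge0\}$ is used; thus the final estimate $u\ge g$ is in fact uniform in $\delta$.
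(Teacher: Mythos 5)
The paper itself does not prove this proposition; it quotes it from \cite{BCL}, where it is obtained by solving a nondegenerate approximation of the penalized equation, establishing the comparison at that smooth level, and passing to the limit. Your overall plan---construct a bounded weak solution by penalization and derive $u\geq g$ from a comparison argument driven by the property $\zeta_\delta=1$ on $\{g^m\geq u^m\}$---is the right idea and matches that reference in spirit. However, the specific computation you propose does not close.

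The trouble is the Ole\u\i nik test function $\eta(x,t)=\int_t^T(g^m-u^m)_+\dif s$. Subtracting the equations and inserting $\eta$, the time term $\iint(g-u)(g^m-u^m)_+\dif x\dif t\geq 0$ is indeed as you say, but the other two claims fail. First, the elliptic contribution is
\begin{displaymath}
\iint_{\Omega_T}\nabla(g^m-u^m)(x,t)\cdot\int_t^T\nabla(g^m-u^m)_+(x,s)\dif s\dif x\dif t,
\end{displaymath}
and this is \emph{not} a nonnegative square. Writing $\nabla(g^m-u^m)=\nabla(g^m-u^m)_+-\nabla(u^m-g^m)_+$, the first part yields $\tfrac12\int_\Omega\bigl|\int_0^T\nabla(g^m-u^m)_+\dif s\bigr|^2\dif x$, but the cross term $-\iint\nabla(u^m-g^m)_+(x,t)\cdot\int_t^T\nabla(g^m-u^m)_+(x,s)\dif s$ mixes different time slices of disjoint sets and carries no sign. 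The cancellation that makes Ole\u\i nik's device work in Lemma~\ref{lem:epsilon-conv} depends precisely on having the same (untruncated) difference inside and outside the $s$-integral, and inserting $(\cdot)_+$ destroys it. Second, the right-hand side produced by $\eta$ is $\iint(\Psi-\zeta_\delta(g^m-u^m)\Psi_+)\,\eta\dif x\dif t$, not $\iint(\Psi-\zeta_\delta(g^m-u^m)\Psi_+)(g^m-u^m)_+\dif x\dif t$ as you wrote. Since $\eta(x,t)>0$ whenever $g>u$ somewhere in $(t,T)$, not only where $g(x,t)>u(x,t)$, you cannot conclude $\zeta_\delta=1$ on $\{\eta>0\}$, and the sign argument breaks.

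A working version tests with $(g^m-u^m)_+$ itself (or with a smooth nondecreasing $\Phi_\varepsilon(g^m-u^m)$ approximating $\chi_{(0,\infty)}$). Then the elliptic term is $\iint\Phi'_\varepsilon(g^m-u^m)|\nabla(g^m-u^m)|^2\geq0$, the source term is $\leq 0$ because $\Phi_\varepsilon$ is supported where $\zeta_\delta=1$, and the time term tends to $\int_\Omega(g-u)_+(x,T)\dif x$ via the chain rule; the time-regularity obstruction you flag in your last paragraph then has to be confronted head on, by Steklov averages or, more comfortably, by running the comparison at the level of the nondegenerate approximation where $u$ is smooth and the pointwise maximum principle is available.
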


\begin{remark}\label{rem:poweapprox}
  In the proof of Theorem~\ref{thm:resolution} below, we need to
  choose approximations of a given continuous function $g$ so that the
  smoothness assumptions of Proposition~\ref{prop:penalized-pme}
  hold. This is accomplished by approximating a suitable smaller power
  $g^\alpha$, $\alpha\leq 1$, of the function rather than the function
  $g$ itself. Indeed, we may express the derivatives of the powers one
  and $m$ in terms of the derivatives of the power $\alpha$.  Some
  simple calculations show that the choice $\alpha=\min\{1,m/2\}$ will
  do.
\end{remark}

\begin{remark}\label{rem:subsols}
  Due to our assumption that the boundary values are positive, the
  roles of upper and lower solutions in the proof of
  Theorem~\ref{thm:resolution} are not quite symmetric. For
  subsolutions, we need a version of
  Proposition~\ref{prop:penalized-pme} where the solutions can change
  sign. See pp.~97-100 in \cite{VazquezBook} for the modifications
  needed to the arguments in \cite{BCL}.
\end{remark}

We need an energy estimate for the time derivative of a solution to
the above equation. For similar results, see
\cite[Proposition~13]{JLVNotes} and \cite[Section~3.2.5]{VazquezBook}.

\begin{theorem}\label{thm:obstacle-timederiv}
  Assume that $f\in L^\infty(\Omega_T)$. Let $u$ be a bounded weak solution to the boundary value problem
  \begin{displaymath}
    \begin{cases}
         \dfrac{\partial u}{\partial t}-\Delta u^m=f \quad\text{in}\quad\Omega_T, &\\
      u^m-g^m\in L^2(0,T;H^1_0(\Omega)),& \\
      u(x,0)=g(x,0).
    \end{cases}
  \end{displaymath}
  Then 
  \begin{displaymath}
    \frac{\partial u^{(m+1)/2}}{\partial t}\in L^2(\Omega_T),
  \end{displaymath}
  with the estimate
     \begin{align*}
    \iint_{\Omega_T}&\scabs{\frac{\partial u^{(m+1)/2}}{\partial t}}^2\dif x\dif
    t+\int_{\Omega}\abs{\nabla (u^m-g^m)}^2(x,T)\dif x\\
    &\leq  c\bigg(
      \int_{\Omega}\left|(g^m)_t(x,T)u(x,T)-(g^m)_t(x,0)g(x,0)\right|
      \dif x\\&\quad +
      \iint_{\Omega_T}\left(\abs{u}^2+u^{m-1}(\abs{f}^2+\abs{\Delta
          g^m}^2)\right)
      \dif x\dif
      t\\&\quad
      +\iint_{\Omega_T}\left(\abs{(g^m)_t}^2+\abs{\Delta g^m}^2+\abs{(g^m)_{tt}}^2
        +\abs{f}^2\right)\dif x\dif
      t\bigg).
    \end{align*}
   Furthermore, we have 
   \[
   \frac{\partial u^{q}}{\partial t}\in L^2(\Omega_T)
   \] 
   for any $q\geq (m+1)/2$.
\end{theorem}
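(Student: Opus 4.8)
The plan is to use Ole\u\i nik-type test functions built from the time derivative, after a change of unknown that linearizes the time term. Writing $v = u^{(m+1)/2}$, one has $u_t = \tfrac{2}{m+1} u^{(1-m)/2} v_t$ and $(u^m)_t = \tfrac{2m}{m+1} u^{(m-1)/2} v_t$, so that formally $u_t (u^m)_t = \tfrac{4m}{(m+1)^2} v_t^2 \geq 0$; the quantity we want to control appears naturally when the equation is tested against $(u^m - g^m)_t$. Since $u^m - g^m \in L^2(0,T;H^1_0(\Omega))$ and vanishes laterally, the (regularized) function $(u^m - g^m)_t$ is an admissible test function, at least after a Steklov averaging in time; I would first carry out the argument formally and then indicate that the passage is justified by replacing $u$ with its time mollification $u_h$, deriving the estimate with constants independent of $h$, and letting $h \to 0$, exactly as in the cited references \cite{JLVNotes} and \cite{VazquezBook}.

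The key computation runs as follows. Testing $u_t - \Delta u^m = f$ with $\varphi = (u^m - g^m)_t$ and integrating over $\Omega \times (0,t)$ gives three groups of terms. The first, $\iint u_t (u^m)_t \dif x \dif t = \tfrac{4m}{(m+1)^2}\iint v_t^2$, is the good term. The second, $-\iint u_t (g^m)_t$, is absorbed by Young's inequality after writing $u_t = \tfrac{2}{m+1} u^{(1-m)/2} v_t$, producing a small multiple of $\iint v_t^2$ plus $c \iint u^{m-1}|(g^m)_t|^2$ — wait, more carefully $u^{(1-m)/2} (g^m)_t$ has the wrong power, so one instead keeps $u_t (g^m)_t$ and integrates by parts in $t$: $-\iint u_t (g^m)_t = -\int_\Omega u (g^m)_t \big|_0^T + \iint u (g^m)_{tt}$, which explains the boundary term $\int_\Omega |(g^m)_t(x,T) u(x,T) - (g^m)_t(x,0) g(x,0)|\dif x$ and the interior term with $|(g^m)_{tt}|^2$ and $|u|^2$ after Young. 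The third group comes from $\iint \nabla u^m \cdot \nabla (u^m - g^m)_t = \tfrac12 \tfrac{d}{dt}\int_\Omega |\nabla(u^m - g^m)|^2 + \iint \nabla g^m \cdot \nabla(u^m-g^m)_t$; the first piece yields $\tfrac12 \int_\Omega |\nabla(u^m - g^m)|^2(x,T)$ (the data at $t=0$ vanishes since $u(\cdot,0) = g(\cdot,0)$) and the second is integrated by parts in $t$ and then in $x$ to move derivatives onto $g^m$, giving terms controlled by $\iint |\Delta g^m|^2$, $\iint |\Delta g^m| |u^m - g^m| $ and a boundary term in $\Delta g^m$ and $u^m - g^m$ at $t = T$; the latter is absorbed into $\tfrac14\int_\Omega|\nabla(u^m-g^m)|^2(x,T)$ via Young plus Poincar\'e, or handled by keeping it as $\int_\Omega \nabla g^m \cdot \nabla(u^m - g^m)(x,T)$ and using $|\nabla g^m| \in L^2$. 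Finally $\iint f (u^m - g^m)_t$ is treated the same way: integrate by parts in $t$ to get $\int_\Omega f(u^m-g^m)\big|_0^T - \iint f_t(u^m - g^m)$ — but $f$ need not be differentiable, so instead one writes $f(u^m)_t = \tfrac{2m}{m+1} f u^{(m-1)/2} v_t$ and applies Young's inequality directly, producing $\tfrac{1}{8}\iint v_t^2 + c\iint u^{m-1}|f|^2$, together with $f (g^m)_t$ handled by plain Young into $\iint |f|^2 + \iint |(g^m)_t|^2$. Collecting everything and absorbing the small multiples of $\iint v_t^2$ from the right to the left yields the stated inequality.

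The main obstacle is the rigorous justification of using $(u^m - g^m)_t$ as a test function, since a priori we only know $u^m \in L^2(0,T;H^1(\Omega))$ and $u_t$ exists merely in a distributional sense; this is precisely where the Steklov average is needed, and one must check that all the boundary terms at $t = 0$ and $t = T$ pass to the limit — for the $t = 0$ terms this uses the attainment of the initial datum $u(\cdot,0) = g(\cdot,0)$ in the sense \eqref{eq:initial-vals-dist}, and for the $t = T$ terms it uses that $v \in C([0,T];L^2(\Omega))$ and $\nabla(u^m - g^m) \in C([0,T];L^2(\Omega))$ once we know the estimate holds for the mollified problem. A secondary technical point is the appearance of the possibly singular weight $u^{(m-1)/2}$ (or $u^{(1-m)/2}$ when $m > 1$, which blows up where $u = 0$): one must make sure it only ever appears paired so as to cancel, i.e. always in the combination $u^{(1-m)/2} v_t = \tfrac{m+1}{2} u^{-m/2} (u^m)_t$ multiplied against something carrying a compensating power of $u$; the bookkeeping above is arranged so that the only residual weighted terms are $\iint u^{m-1}(|f|^2 + |\Delta g^m|^2)$, which is exactly what appears in the statement. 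For the last assertion, that $\partial_t u^q \in L^2(\Omega_T)$ for every $q \geq (m+1)/2$, I would write $u^q = (u^{(m+1)/2})^{2q/(m+1)} = \phi(v)$ with $\phi(s) = s^{2q/(m+1)}$ and exponent $2q/(m+1) \geq 1$; then $\partial_t u^q = \tfrac{2q}{m+1} v^{(2q/(m+1)) - 1} v_t$, and since $v = u^{(m+1)/2}$ is bounded (as $u$ is bounded) the factor $v^{(2q/(m+1)) - 1}$ is in $L^\infty(\Omega_T)$, so $\partial_t u^q \in L^2(\Omega_T)$ follows immediately from $v_t \in L^2(\Omega_T)$.
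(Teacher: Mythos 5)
Your overall strategy is the same as the paper's: set $w=u^{(m+1)/2}$, recognize the good term via $u_t(u^m)_t=\frac{4m}{(m+1)^2}|w_t|^2$, test against $(u^m-g^m)_t$, integrate by parts, handle the singular weight via the factorization $(u^m)_t=u^{(m-1)/2}w_t$ with Young's inequality, justify formally first and then via mollification, and deduce the final claim from the boundedness of $u$ by writing $u^q=w^{2q/(m+1)}$ with exponent $\geq 1$. Your treatment of the term $u_t(g^m)_t$ by integration by parts in time, producing $(g^m)_{tt}$ and the boundary terms, is correct and matches the paper. The last paragraph on $\partial_t u^q\in L^2$ is also correct.

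There is one genuine misstep, in the cross term $\iint\nabla g^m\cdot\nabla(u^m-g^m)_t$. You propose to integrate by parts in $t$ and then in $x$ ``to move derivatives onto $g^m$,'' and you claim the outcome is controlled by $\iint|\Delta g^m|^2$, $\iint|\Delta g^m||u^m-g^m|$, and a boundary term. But integrating by parts in time here transfers a time derivative onto $\nabla g^m$, so after the spatial integration by parts you are left with an interior term containing $(\Delta g^m)_t$ — a third-order derivative of $g^m$. That quantity is neither covered by the assumed regularity ($g^m\in C^2(\overline\Omega_T)$) nor present in the stated estimate; and the substitute $\iint|\Delta g^m||u^m-g^m|$ you mention does not appear in the statement either. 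The correct route is the one you yourself use two sentences later for the $f$ term: integrate by parts only in space to get $-\iint\Delta g^m\,(u^m-g^m)_t$, split $(u^m-g^m)_t=(u^m)_t-(g^m)_t$, and then use $(u^m)_t=u^{(m-1)/2}w_t$ together with Young's inequality. This yields $\varepsilon\iint|w_t|^2+c_\varepsilon\iint u^{m-1}|\Delta g^m|^2$ plus $\iint|(g^m)_t\,\Delta g^m|$, which are exactly the terms in the statement. You correctly noticed the wrong-power pitfall for $u_t(g^m)_t$ and the non-differentiability pitfall for $f$, but overlooked that the same factorization cures the $\Delta g^m$ term as well, with the benign exponent $m-1>0$ (so there is no actual singularity there).
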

\begin{proof}
  First we assume that $u$ is smooth in $\overline\Omega_T$. This assumption may be removed
  by a standard approximation argument, see \cite[Proof of Theorem~5.5]{VazquezBook} and \cite[Section~1.3.2]{Kiinalaiset}. Denote
  $w=u^{(m+1)/2}$. Then
  \begin{align*}
    \scabs{\frac{\partial w}{\partial t}}^2&=
    \frac{(m+1)^2}{4}u^{m-1}\abs{u_t}^2\\
    &=\frac{(m+1)^2}{4m}(u^m)_tu_t
    = \frac{(m+1)^2}{4m}\big((u^m-g^m)_tu_t+(g^m)_tu_t\big)\\
    &= \frac{(m+1)^2}{4m}\big((u^m-g^m)_t(\Delta u^m-\Delta g^m)\\&\quad+(g^m)_tu_t+\Delta
    g^m(u^m-g^m)_t
    +f(u^m-g^m)_t\big).
  \end{align*}
  We focus on the first term after the last equality. To this
  end, we note that
  \begin{align*}
    \frac{1}{2}\frac{d}{dt}\int_{\Omega}\abs{\nabla u^m-g^m}^2\dif x=&
    \int_{\Omega}\nabla (u^m-g^m)\cdot\nabla (u^m-g^m)_t\dif x\\
    =&-\int_{\Omega}\Delta(u^m-g^m)(u^m-g^m)_t\dif x,
  \end{align*}
  since $u^m-g^m$ has zero boundary values on the lateral boundary. Thus an integration gives
 \[
  \begin{split}
    &\iint_{\Omega_T}\scabs{\frac{\partial w}{\partial t}}^2\dif x\dif t
      =-\frac{(m+1)^2}{8m}\int_0^T\frac{d}{dt}\int_{\Omega}\abs{\nabla(u^m-g^m)}^2\dif
      x\dif t\\
      &\quad+\frac{(m+1)^2}{4m} \iint_{\Omega_T}\big((g^m)_tu_t+(u^m-g^m)_t\Delta g^m+f(u^m-g^m)_t\big)\dif x\dif t\\
      &=-\frac{(m+1)^2}{8m}\left.\int_{\Omega}\abs{\nabla(u^m-g^m)}^2\dif
        x\right|_{0}^T\\
      &\quad+\frac{(m+1)^2}{4m}\left(\iint_{\Omega_T}-(g^m)_{tt} u\dif x\dif t
      +\left.\int_{\Omega}(g^m)_t u\dif x\right|_0^T\right)\\
      &\quad+\frac{(m+1)^2}{4m}\iint_{\Omega_T}\big((u^m)_t\Delta g^m-(g^m)_t\Delta
      g^m-f(g^m)_t+(u^m)_tf\big)\dif x\dif t.
    \end{split}
  \]
  To proceed, we compute 
  \begin{displaymath}
    \frac{\partial u^m}{\partial t}=u^{m-(m+1)/2}\frac{\partial
      w}{\partial t},
  \end{displaymath}
  and apply Young's inequality to the two terms containing the time
  derivative of $u^m$ to get
  \begin{displaymath}
    \iint_{\Omega_T}\abs{(u^m)_t \Delta g^m}\dif x\dif t\leq \varepsilon
    \iint_{\Omega_T}\scabs{\frac{\partial
      w}{\partial t}}^2\dif x\dif t
  +c_\varepsilon\iint_{\Omega_T}u^{m-1}\abs{\Delta g^m}^2\dif x\dif t
  \end{displaymath}
  and
  \begin{displaymath}
    \iint_{\Omega_T}\abs{(u^m)_t f}\dif x\dif t\leq
    \varepsilon\iint_{\Omega_T} \scabs{\frac{\partial w}{\partial
        t}}^2\dif x\dif t
    +c_{\varepsilon}\iint_{\Omega_T}u^{m-1}\abs{f}^2\dif x\dif t.
  \end{displaymath}
  We insert these inequalities into the estimate above, choose a sufficiently
  small $\varepsilon$, and absorb the matching terms to get
  \begin{displaymath}
      \begin{aligned}
    \iint_{\Omega_T}&\scabs{\frac{\partial w}{\partial t}}^2\dif x\dif
    t+\int_{\Omega}\abs{\nabla (u^m-g^m)}^2(x,T)\dif x \\
    &\leq  c\bigg(\int_{\Omega}\abs{\nabla(u^m-g^m)}^2(x,0)\dif x+
      \iint_{\Omega_T}\abs{(g_{tt})^mu}\dif x\dif
      t\\&\quad+\int_{\Omega}\abs{(g^m)_t(x,T)u(x,T)-(g^m)_t(x,0)u(x,0)}\dif x\\
      &\quad+\iint_{\Omega_T}u^{m-1}\abs{\Delta g^m}^2\dif x\dif t
      +\iint_{\Omega_T}\abs{(g^m)_t\Delta g^m}\dif x\dif t\\
      &\quad+\iint_{\Omega_T}\abs{f(g^m)_t}\dif x\dif t+\iint_{\Omega_T}u^{m-1}\abs{f}^2\dif x\dif t\bigg).
    \end{aligned}
  \end{displaymath}
  We recall that $u^m=g^m$ at the initial time, so the required
  estimate follows from an application of Cauchy's inequality.
\end{proof}

\begin{lemma}\label{lem:obst-estimate}
  Let $g$ satisfy the smoothness assumptions of
  Proposition~\ref{prop:penalized-pme},
  and let $v$ be the solution to the boundary value problem of
  Proposition~\ref{prop:penalized-pme}. Let $u=P(v,D_{t_1,T})$ be the
  Poisson modification of $v$ with respect to a regular space-time
  cylinder $D_{t_1,T}$ with $D\Subset\Omega$ and $0<t_1<T$. Then
  \begin{align*}
    \iint_{D_{t_1,T}}&\abs{\nabla u^m}^2\dif x\dif
    t+\sup_{t_1<t<T}\int_{D}u^{m+1}(x,t)\dif x \\
    &\leq  c\bigg(\iint_{D_{t_1,T}}\bigg(\abs{\nabla
        v^m}^2+\abs{v}^2 +\scabs{\frac{\partial v^m}{\partial t}}^2\bigg)\dif x\dif t\\
    &\quad+\sup_{t_1<t<T}\int_{D}v^{m+1}(x,t)\dif x\bigg).
  \end{align*}
\end{lemma}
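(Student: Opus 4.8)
The plan is to test the equation satisfied by $u$ against $\varphi=u^{m}-v^{m}$. I first record the structural facts. Since $f:=\zeta_\delta(g^{m}-v^{m})\Psi_+\ge 0$, the function $v$ of Proposition~\ref{prop:penalized-pme} is a bounded weak supersolution with $v^{m}\in L^2(0,T;H^1(\Omega))$, and Theorem~\ref{thm:obstacle-timederiv}, applied with this $f\in L^\infty(\Omega_T)$ and the smooth $g$, gives $\partial_t v^{m}\in L^2(\Omega_T)$ (take $q=m\ge (m+1)/2$ there). By the construction of the Poisson modification, $u$ is the weak solution of \eqref{eq:pme} in $D_{t_1,T}$ taking the boundary values $v$ on $\partial_p D_{t_1,T}$; in particular $u^{m}-v^{m}\in L^2(t_1,T;H^1_0(D))$, $u(x,t_1)=v(x,t_1)$, and $0\le u\le v$, the last because the Poisson modification lies below the supersolution $v$. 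The computations below are formal but are made rigorous in the standard way by passing to Steklov averages of $u$ and $v^{m}$ (see \cite{DK} or \cite{VazquezBook}); the membership $\partial_t v^{m}\in L^2(\Omega_T)$ is precisely what legitimizes the time-integration by parts that produces the term $\iint\scabs{\partial_t v^{m}}^2$ on the right.

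Testing $\partial_t u-\Delta u^{m}=0$ with $\varphi=u^{m}-v^{m}$ over $D_{t_1,\tau}$, $t_1<\tau\le T$, and using that $\varphi(\cdot,t)\in H^1_0(D)$ for a.e.\ $t$, I obtain
\[
  \iint_{D_{t_1,\tau}}\partial_t u\,(u^{m}-v^{m})\dif x\dif t+\iint_{D_{t_1,\tau}}\nabla u^{m}\cdot\nabla(u^{m}-v^{m})\dif x\dif t=0 .
\]
Now $\iint_{D_{t_1,\tau}}\partial_t u\,u^{m}\dif x\dif t=\frac{1}{m+1}\bigl(\int_D u^{m+1}(x,\tau)\dif x-\int_D u^{m+1}(x,t_1)\dif x\bigr)$, while integrating by parts in $t$ gives $\iint_{D_{t_1,\tau}}\partial_t u\,v^{m}\dif x\dif t=\int_D u(x,\tau)v^{m}(x,\tau)\dif x-\int_D u(x,t_1)v^{m}(x,t_1)\dif x-\iint_{D_{t_1,\tau}}u\,\partial_t v^{m}\dif x\dif t$. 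Since $u=v$ at time $t_1$, the two contributions at that time combine into $-\frac{m}{m+1}\int_D v^{m+1}(x,t_1)\dif x\le 0$, which I discard. After rearranging, this leaves
\[
  \frac{1}{m+1}\int_D u^{m+1}(x,\tau)\dif x+\iint_{D_{t_1,\tau}}\abs{\nabla u^{m}}^2\dif x\dif t\le \int_D u(x,\tau)v^{m}(x,\tau)\dif x+\iint_{D_{t_1,\tau}}\abs{u\,\partial_t v^{m}}\dif x\dif t+\iint_{D_{t_1,\tau}}\abs{\nabla u^{m}}\,\abs{\nabla v^{m}}\dif x\dif t .
\]

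The final step is to dispose of the three terms on the right by Young's inequality. Using the conjugate exponents $m+1$ and $\tfrac{m+1}{m}$ with a small parameter, $\int_D u(x,\tau)v^{m}(x,\tau)\dif x\le\frac{1}{2(m+1)}\int_D u^{m+1}(x,\tau)\dif x+c\int_D v^{m+1}(x,\tau)\dif x$; next, $\iint\abs{\nabla u^{m}}\abs{\nabla v^{m}}\le\frac12\iint\abs{\nabla u^{m}}^2+\frac12\iint\abs{\nabla v^{m}}^2$; and, using $0\le u\le v$, $\iint\abs{u\,\partial_t v^{m}}\le\frac12\iint\abs{v}^2+\frac12\iint\scabs{\partial_t v^{m}}^2$. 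Inserting these and absorbing the terms $\frac{1}{2(m+1)}\int_D u^{m+1}(x,\tau)\dif x$ and $\frac12\iint\abs{\nabla u^{m}}^2$ into the left-hand side, then bounding $\int_D v^{m+1}(x,\tau)\dif x$ by $\sup_{t_1<t<T}\int_D v^{m+1}(x,t)\dif x$ and enlarging the time integrals on the right to $D_{t_1,T}$, I am left with an inequality valid for every $\tau\in(t_1,T)$ in which both $\int_D u^{m+1}(x,\tau)\dif x$ and $\iint_{D_{t_1,\tau}}\abs{\nabla u^{m}}^2$ are controlled. Letting $\tau\uparrow T$ in the $\abs{\nabla u^{m}}^2$ term (monotone convergence) and taking the supremum over $\tau$ in the $u^{m+1}$ term, and adding, gives the asserted bound.

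The main obstacle is not a single estimate but the justification of the time manipulations for the merely weak solution $u$: that $\iint\partial_t u\,u^{m}=\frac{1}{m+1}\iint\partial_t u^{m+1}$, that $\iint\partial_t u\,v^{m}$ may be integrated by parts in $t$, and that all boundary-in-time terms are meaningful. These are handled by carrying out the identities for the Steklov averages $u_h$ and $(v^{m})_h$ and letting $h\to0$; here $v^{m}\in L^2(0,T;H^1(\Omega))$ and $\partial_t v^{m}\in L^2(\Omega_T)$ from Theorem~\ref{thm:obstacle-timederiv} guarantee the required convergences. Everything else is Young's inequality and bookkeeping.
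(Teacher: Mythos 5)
Your proof is correct and follows essentially the same route as the paper: test the equation for $u$ with $u^m-v^m$, pass the time derivative onto $u^{m+1}$ and $v^m$ via integration by parts, use $u\le v$ (and $u=v$ at $t=t_1$), invoke Theorem~\ref{thm:obstacle-timederiv} to make sense of $\partial_t v^m$, and finish with Young's inequality and a supremum in time. The minor variations — integrating over $(t_1,\tau)$ from the outset rather than replacing $T$ afterward, combining the two $t_1$-boundary terms into a single nonpositive quantity, and using Young with exponents $m+1$ and $(m+1)/m$ on the $\int_D u(\tau)v^m(\tau)$ term instead of the direct bound $u\le v$ — are only cosmetic.
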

\begin{proof}
 The formal computations below are
  justified rigorously by a standard application of a suitable
  mollification in the time direction. Since $u$ is a solution in
  $D_{t_1,T}$ with boundary values given by $v^m$, we may use
  $u^m-v^m$ as a test function and have
  \begin{equation}\label{eq:energy-est1}
    \iint_{D_{t_1,T}}\left(\frac{\partial u}{\partial t}(u^m-v^m)
    +\nabla u^m\cdot\nabla (u^m-v^m)\right)\dif x\dif t=0.
  \end{equation}
  The next goal is to eliminate the time derivative of $u$. We use the fact
  that $(u^{m+1})_t=(m+1)u_t u^m$ and integrate by parts in the other
  term to get
  \begin{align*}
    \iint_{D_{t_1,T}}&\frac{\partial u}{\partial t}(u^m-v^m)\dif x\dif t\\
      &=  \frac{1}{m+1}\left(\int_{D}u^{m+1}(x,T)\dif
        x-\int_{D}u^{m+1}(x,t_1)\dif x \right)\\&\quad
      +\iint_{D_{t_1,T}}u\frac{\partial v^m}{\partial t}\dif x\dif
      t\\
      &\quad-\int_{D}u(x,T)v^m(x,T)\dif x+\int_{D}u(x,t_1)v^m(x,t_1)\dif x.
        \end{align*}
  This leads to the estimate
  \begin{align*}
    0&\leq \iint_{D_{t_1,T}}\abs{\nabla u^m}^2\dif x\dif
    t+\frac{1}{m+1}\int_{D}u^{m+1}(x,T)\dif x\\
    &\leq  \frac{1}{m+1}\int_{D}v^{m+1}(x,t_1)\dif x + \int_{D}v^{m+1}(x,T)\dif x
    -\iint_{D_{t_1,T}}u\frac{\partial v^m}{\partial
      t}\dif x\dif t\\&\quad+\iint_{D_{t_1,T}}\nabla u^m\cdot\nabla v^m\dif x\dif t,
  \end{align*}
  since $u\leq v$. By Young's inequality, we obtain
  \begin{align*}
    \iint_{D_{t_1,T}}&\nabla u^m\cdot\nabla v^m\dif x\dif t\\
    &\leq\varepsilon\iint_{D_{t_1,T}}\abs{\nabla u^m}^2\dif x\dif t+
    c_\varepsilon\iint_{D_{t_1,T}}\abs{\nabla v^m}^2\dif x\dif t.
  \end{align*}
  We insert this into the previous estimate, and absorb the matching
  terms. We arrive at
 \begin{align*}
    \int_{D_{t_1,T}}&\abs{\nabla u^m}^2\dif x\dif
    t+\frac{1}{m+1}\int_{D}u^{m+1}(x,T)\dif x\\
      &\leq c \bigg(\int_{D}v^{m+1}(x,t_1)\dif x
        +\int_{D}v^{m+1}(x,T)\dif x\\ 
        &\quad+\int_{D_{t_1,T}}\abs{v}\scabs{\frac{\partial v^m}{\partial
            t}}+\iint_{D_{t_1,T}}\abs{\nabla v^m}^2\dif x\dif t\bigg).
    \end{align*}
  The proof is then completed by estimating the term with
  $v^{m+1}(x,T)$ by a supremum over time, and by replacing $T$ by
  $t_1<\tau<T$ such that
  \begin{displaymath}
    \int_{D}v^{m+1}(x,\tau)\dif x\geq
   \sup_{t_1<t<T} \frac{1}{2}\int_{D}v^{m+1}(x,t)\dif x,
  \end{displaymath}
  and applying Young's inequality.
\end{proof}

\begin{proof}[Proof of Theorem~\ref{thm:resolution}]
  By extension, we may assume that $g$ is defined in the whole of
  $\R^{N+1}$.  We first show that it suffices to prove that for smooth
  boundary values $g$ that both the upper and lower Perron solution
  agree with the unique weak solution solution with boundary and
  initial values $g$, in the sense of \eqref{eq:ibvp}.  Let us denote
  $\varepsilon_j=1/j$, $j=1,2,\dots$. There exist functions
  $\varphi_j$ satisfying the smoothness assumptions of
  Proposition~\ref{prop:penalized-pme}, see
  Remark~\ref{rem:poweapprox}, converging uniformly to $g$ and such
  that
  \begin{displaymath}
    \varphi_j^m\leq g^m \leq \varphi_j^m+\varepsilon_j^m.
  \end{displaymath}
  Assuming the above conclusion for smooth functions, we get
  \begin{displaymath}
    \Perron_{\varphi_j}\leq \LowerPerron_g\leq \UpperPerron_g\leq 
    \Perron_{(\varphi_j^m+\varepsilon_j^m)^{1/m}}.
  \end{displaymath}
 Since
  \[
  \abs{\Perron_{\varphi_j}-\Perron_{(\varphi_j^m+\varepsilon_j^m)^{1/m}}}\to 0
  \] 
  as $j\to \infty$ by Lemma~\ref{lem:epsilon-conv}, it follows that $\LowerPerron_g=\UpperPerron_g$ almost everywhere.
  The conclusion that $\LowerPerron_g=\UpperPerron_g$ everywhere follows by continuity of the Perron solutions.

  Let us then assume that $g$ is smooth, and let $h$ be the unique
  weak solution with initial and boundary values given by $g$, i.e.
  $h^m-g^m\in L^2(0,T; H^1_0(\Omega))$ and \eqref{eq:ibvp} holds. We
  need to show that $h\geq \UpperPerron_g$; the problem is that we do
  not know whether $h$ belongs to the upper class or not.  To deal
  with this, let $v$ be the solution of the the penalized boundary
  value problem of Proposition~\ref{prop:penalized-pme}. Then also
  \[
  v^m-g^m \in L^2(0,T; H^1_0(\Omega)).
  \]  
  Exhaust $\Omega_T$ by an
  increasing sequence of regular cylinders $Q_j=U_j\times (t_j,T)$, and let
  $w_j=P(v,Q_j)$, $j=1,2,\dots$. Then $w_j\in \UpperClass_g$, the
  sequence $(w_j)$ is decreasing, and $\UpperPerron_g\leq w_j$. The
  limit function
  \[
  w=\lim_{j\to \infty}w_j
  \] 
  is a solution in $\Omega_T$, and 
  \begin{displaymath}
    w\geq \UpperPerron_g
  \end{displaymath}
  since $w$ is a pointwise limit of functions in the upper class. It
  remains to show that $w$ has the boundary and initial values given
  by $g$, since then by the uniqueness of weak solutions, we have
  \[
  h=w\geq \UpperPerron_g.
  \] 
  
  To check the lateral boundary values, note that the sequence $(w_
  j^m-g^m)$ is bounded in $L^2(0,T;H^1_0(\Omega))$ by
  Lemma~\ref{lem:obst-estimate}. It follows that 
  \[
  w^m-g^m \in L^2(0,T; H^1_0(\Omega)),
  \] since $L^2(0,T; H^1_0(\Omega))$ is a closed
  subspace of $L^2(0,T; H^1(\Omega))$ and weak limits must agree with
  pointwise limits.

  We use the criterion \eqref{eq:initial-vals-dist} to show that the
  initial values of the limit function $w$ are given by the function
  $g(x,0)$. Let $\eta\in C^\infty_0(\Omega)$ be arbitrary. Choose a
  time instant $0<t<T$, and $j$ large enough, so that $t_j<t$ and 
  so that the support of $\eta$ is contained in $U_j$. 
  We have
  \[
  \begin{split}
    &\scabs{\int_{\Omega}(w(x,t)-g(x,0))\eta(x)\dif x}
    \leq  \scabs{\int_{\Omega}(w(x,t)-w_j(x,t))\eta(x)\dif x}\\
    &\quad+\scabs{\int_{\Omega}(w_j(x,t)-w_j(x,t_j))\eta(x)\dif x}
    +\scabs{\int_{\Omega}(v(x,t_j)-g(x,0))\eta(x)\dif x}
  \end{split}
  \]
  by adding and substracting suitable terms, using the triangle
  inequality, and the fact that $w_j(x,t_j)=v(x,t_j)$ on the support
  of $\eta$. The first and third terms on the right tend to zero as
  $j\to \infty$. To deal with the second term, we formally test the
  equation satisfied by $w_j$ with $\varphi=\eta\chi_{(t_j,t)}$, where
  $\chi_{(t_j,t)}$ is the characteristic function of the interval
  $(t_j,t)$.  This can be justified by an approximation argument. We
  get
  \begin{equation*}
    \scabs{\int_{\Omega}(w_j(x,t)-w_j(x,t_j))\eta\dif
      x}=\scabs{\iint_{U_j\times(t_j,t)}\nabla w_j^m\cdot \nabla\eta\dif x\dif t}.
  \end{equation*}
  We estimate the right hand side by H\"older's inequality to get
  \begin{displaymath}
    \scabs{\iint_{U_j\times(t_j,t)}\nabla w_j^m\cdot \nabla\eta\dif
      x\dif t}\leq \abs{\Omega}^{1/2}\abs{t-t_j}^{1/2}\norm{\nabla
      w_j^m}_{L^2(\Omega_T)}\norm{\nabla \eta}_{L^{\infty}(\Omega)},
  \end{displaymath}
  where we also used the fact that $\abs{U_j\times(t_j,t)}\leq
  \abs{\Omega}\abs{t-t_j}$. Since the norm of $\nabla w_j^m$ can be
  controlled independently of $j$ by applying
  Lemma~\ref{lem:obst-estimate}, we may use this estimate for the
  second term to get
  \begin{displaymath}
    \scabs{\int_{\Omega}(w(x,t)-g(x,0))\eta(x)\dif x}\leq c
    t\norm{\nabla \eta}_{L^\infty(\Omega)}
  \end{displaymath}
  after letting $j\to \infty$.  Since $\eta$ was arbitrary, letting
  $t\to 0$ shows that \eqref{eq:initial-vals-dist} holds for the
  function $w$, as desired.

  By a similar argument using the variant of
  Proposition~\ref{prop:penalized-pme} described in Remark
  \ref{rem:subsols}, we see that $h\leq \LowerPerron_g$, so that
  \begin{displaymath}
    h\leq \LowerPerron_g\leq \UpperPerron_g\leq h,
  \end{displaymath}
  which completes the proof.
\end{proof}

The second part of the previous proof gives the following uniqueness result.

\begin{theorem}\label{thm:u=H}
  Let $g$ satisfy the smoothness assumptions of
  Proposition~\ref{prop:penalized-pme}
  and let $u$ be the weak solution to the boundary value problem in the sense of \eqref{eq:ibvp}. 
Then $u=\Perron_g$.
\end{theorem}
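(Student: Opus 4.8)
The plan is to observe that Theorem~\ref{thm:u=H} is precisely the ``smooth boundary values'' step that is carried out inside the proof of Theorem~\ref{thm:resolution}: there, after reducing a general continuous $g$ to smooth approximants $\varphi_j$, one shows that the \emph{unique} weak solution with data $\varphi_j$ in the sense of \eqref{eq:ibvp} coincides with both $\UpperPerron_{\varphi_j}$ and $\LowerPerron_{\varphi_j}$. Since the hypotheses of Theorem~\ref{thm:u=H} are exactly the smoothness assumptions of Proposition~\ref{prop:penalized-pme} — i.e. precisely what that step requires — no extension or approximation is needed, and the proof amounts to running that argument once, directly with the given $g$.

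Concretely, I would let $v$ be the solution of the penalized problem of Proposition~\ref{prop:penalized-pme} associated with $g$, so that $v\geq g$, $v$ is a bounded weak (hence viscosity) supersolution with $\liminf_{z\to\xi}v(z)\geq g(\xi)$ on $\partial_p\Omega_T$, and $v^m-g^m\in L^2(0,T;H^1_0(\Omega))$; thus $v\in\UpperClass_g$. Exhausting $\Omega_T$ by an increasing sequence of regular cylinders $Q_j=U_j\times(t_j,T)$ with $U_j\Subset\Omega$ and $t_j\to 0$, set $w_j=P(v,Q_j)$. These lie in $\UpperClass_g$, the sequence is decreasing with $\UpperPerron_g\leq w_j\leq v\leq M$, and by Proposition~\ref{prop:superconvergence} the limit $w=\lim_j w_j$ is a weak solution in $\Omega_T$ with $w\geq\UpperPerron_g$. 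To identify $w$ with the weak solution of \eqref{eq:ibvp} I would check the lateral and initial data: Lemma~\ref{lem:obst-estimate} bounds $\norm{\nabla w_j^m}_{L^2(Q_j)}$ and $\sup_t\int_D w_j^{m+1}$ uniformly in $j$, so $(w_j^m-g^m)$ is bounded in $L^2(0,T;H^1_0(\Omega))$; since that space is a closed, hence weakly closed, subspace of $L^2(0,T;H^1(\Omega))$ and the weak limit agrees with the pointwise limit, $w^m-g^m\in L^2(0,T;H^1_0(\Omega))$. For the initial values I would test the equation for $w_j$ with $\eta\,\chi_{(t_j,t)}$, $\eta\in C_0^\infty(\Omega)$, to obtain
\[
\scabs{\int_{\Omega}(w_j(x,t)-w_j(x,t_j))\eta\dif x}
=\scabs{\iint_{U_j\times(t_j,t)}\nabla w_j^m\cdot\nabla\eta\dif x\dif t}
\leq c\,\abs{t-t_j}^{1/2}\,\norm{\nabla w_j^m}_{L^2(\Omega_T)}\,\norm{\nabla\eta}_{L^\infty(\Omega)},
\]
and, using that $w_j(\cdot,t_j)=v(\cdot,t_j)$ on $\spt\eta$ and that $v$ attains $g(\cdot,0)$ initially, let $j\to\infty$ and then $t\to0$ to get \eqref{eq:initial-vals-dist} for $w$. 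By uniqueness of solutions to \eqref{eq:ibvp} this forces $w=u$, hence $u=w\geq\UpperPerron_g$. The symmetric argument, with the sign-changing variant of Proposition~\ref{prop:penalized-pme} from Remark~\ref{rem:subsols}, gives $u\leq\LowerPerron_g$, and together with $\LowerPerron_g\leq\UpperPerron_g$ from \eqref{eq:perron-order} we conclude $u=\LowerPerron_g=\UpperPerron_g=\Perron_g$.

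I do not expect a real obstacle: Proposition~\ref{prop:penalized-pme}, Lemma~\ref{lem:obst-estimate}, Proposition~\ref{prop:superconvergence}, the uniqueness for \eqref{eq:ibvp}, and the characteristic-function testing are all already available, so the only work is bookkeeping — confirming that the argument in the second half of the proof of Theorem~\ref{thm:resolution} never uses more than the stated smoothness of $g$. The one point deserving a word of care is that the bounds from Lemma~\ref{lem:obst-estimate} on $Q_j$ must stay uniform as $t_j\to0$; this holds because the right-hand side there depends only on the fixed function $v$ on all of $\Omega_T$, whose relevant energy quantities are finite by Theorem~\ref{thm:obstacle-timederiv} applied to the penalized equation.
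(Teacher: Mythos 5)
Your proposal is correct and coincides with the paper's own treatment: the paper proves Theorem~\ref{thm:u=H} by simply noting that "the second part of the previous proof gives the following uniqueness result," i.e.\ exactly the argument you spell out (penalized problem from Proposition~\ref{prop:penalized-pme}, Poisson modifications $w_j=P(v,Q_j)$, energy bounds from Lemma~\ref{lem:obst-estimate}, the initial-value check via \eqref{eq:initial-vals-dist}, uniqueness for \eqref{eq:ibvp}, and the symmetric subsolution variant from Remark~\ref{rem:subsols}). Your added remark about uniformity of the bounds as $t_j\to 0$ is a correct and helpful clarification but not a departure from the paper's route.
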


\def\cprime{$'$}

\end{document}